\renewcommand\thefigure{\thesection.\@arabic\c@figure}
\renewcommand\thetable{\thesection.\@arabic\c@table}
 \newcommand{\new}{\newcommand*}
 \new{\rnew}{\renewcommand*}
 \new{\newe}{\newenvironment*}
 \new{\stl}{\setlength}
 \stl{\arraycolsep}{0.5mm}
\newcommand{\refl}[1]{Lemma~{\rm \ref{#1}}}
\newcommand{\reft}[1]{Theorem~{\rm \ref{#1}}}
\newtheorem{thm}{\bf Theorem}
\newenvironment{theorem}{\begin{thm}} {\end{thm}}
\newtheorem{lmm}{\bf Lemma}
\newenvironment{lemma}{\begin{lmm}}{\end{lmm}}
\theoremstyle{remark}
\theoremstyle{definition}
\newcommand{\refe}[1]{{\rm (\ref{#1})}}
\newcommand {\bgeq}[1]{\begin{equation}\label{#1}}
\newcommand \edeq {\end{equation}}
\newcommand \bgth {\begin{theorem}\label}
\newcommand \edth {\end{theorem}}
\newcommand \bglm {\begin{lemma}\label}
\newcommand \edlm {\end{lemma}}
\newcommand {\bgar}[1]{\begin{array}{#1}}
\newcommand {\edar}{\end{array}}
\newcommand \Lbd {\Lambda}
\newcommand \af  {\alpha}
\newcommand \bt  {\beta}
\newcommand \gm   {\gamma}
\newcommand \dt  {\delta}
\newcommand \dint {\displaystyle \int}
\newcommand \dsum {\displaystyle\sum}
\def \nns {\\[6pt]}
\def \lbd {\lambda}
\def \Om {\Omega}
\renewcommand \Om {\Lbd}
\begin{document}
\bibliographystyle{plain}
\title[Laguerre spectral method]
{A fully diagonalized spectral method using generalized Laguerre functions on the half line
}
\author[F. Liu]{Fu-jun Liu$^{1}$}
\thanks{$^{1}$Department of Mathematics, Shanghai Normal
University, Shanghai, 200234, China; School of Science, Henan Institute of Engineering, Zhengzhou, 451191, China. Email: liufujun1981@126.com}

\author[Z. Wang]{Zhong-qing Wang$^{2}$}
\thanks{$^{2}$School of Science, University of Shanghai for Science and Technology, Shanghai, 200093, China. Email: zqwang@usst.edu.cn}

\author[H. Li]{Hui-yuan Li$^{3}$}
\thanks{$^{3}$State Key Laboratory of Computer Science/Laboratory of Parallel Computing,  Institute of Software, Chinese Academy of Sciences, Beijing 100190, China. Email: huiyuan@iscas.ac.cn}

\keywords{Spectral method,  Sobolev orthogonal Laguerre functions,
 elliptic boundary value problems, error estimates.} \subjclass[2000]{76M22, 33C45, 35J25,
 65L70}

\thanks{The first author was supported by Science and Technology Research Program of Education Department of Henan Province (No. 13A110005);
The second author was supported in part by National Natural Science
Foundation of China (No. 11571238) and the Research Fund for
Doctoral Program of Higher Education of China (No. 20133127110006);
The third author was supported by National Natural Science Foundation of China (Nos. 91130014, 11471312 and 91430216).}
\begin{abstract}
A fully diagonalized spectral method using generalized Laguerre functions
is proposed and analyzed  for solving elliptic equations on the half line.
We  first define  the generalized Laguerre functions which are complete and
mutually orthogonal  with respect to an equivalent Sobolev inner product.
Then the Fourier-like Sobolev orthogonal basis functions are constructed
for the  diagonalized Laguerre spectral method  of elliptic equations.
Besides,
a unified  orthogonal Laguerre projection is established
for various  elliptic equations.
On the basis of this orthogonal Laguerre projection, we obtain optimal error  estimates
of the fully diagonalized Laguerre spectral method for both Dirichlet and Robin
boundary value problems. Finally, numerical experiments, which are in agreement with the
theoretical analysis, demonstrate the effectiveness and the spectral accuracy of
our diagonalized method.
\end{abstract}
\maketitle

\section{Introduction}
Spectral methods for solving partial differential equations on unbounded domains
 have gained a rapid development
during  the last few decades.  An abundance  of literature on this research topic
has emerged, and their underlying  approximation approaches
can  be essentially classified into three catalogues  \cite{Boydbook,ShenWang}: 

\begin{itemize}
\item[(i)] truncate an unbounded domain to a bounded one and solve the problem on the bounded domain subject to  artificial or transparent boundary conditions \cite{NS,SW07};
\item[(ii)]  map the original problem on an unbounded domain  to  one on a bounded domain and use classic  spectral methods to solve the new problem \cite{G00}; or equivalently, approximate the  original problem  by  some  non-classical functions  mapped  from the classic orthogonal
polynomials/functions on a bounded domain \cite{Boyd87a,Boyd87b,Cris82,GSW00,GSW02,ShenWang,WG08,YG12};
\item[(iii)]  directly approximate the original problem by genuine   orthogonal functions such as  Laguerre polynomials or functions on the unbounded domain \cite{Coulaud,GS,GuoShenXu,GuoSunZhang,GWW,GuoZhang,GuoZhang1,IF,MaG,MPV,Shen,WGW,WX15,XG1,ZSX}.
\end{itemize}

The third approach is of particular interest to researchers, and has won an increasing popularity in a broad class of  applications, owing to its essential advantages over other two approaches.
These direct approximation schemes constitute an initial step
towards the efficient spectral methods,  which admit  fast and stable algorithms
for their efficient implementations.

As we know,
the Fourier spectral method makes use of
 the eigenfunctions of the Laplace operator which are orthogonal to each other with respect to the Sobolev inner product  involving derivatives, thus  the corresponding algebraic system is diagonal \cite{Boydbook,CHQZ06,STW}.
 This fact together with  the availability of the fast Fourier transform (FFT) makes
 the Fourier spectral method be an ideal approximation approach  for differential equations
 with periodic boundary conditions.
 Although the utilization of the genuine orthogonal polynomials/functions in this direct approach usually  leads to a highly sparse (e.g., tri-diagonal, penta-diagonal) and well-conditioned algebraic system, however, in many cases, people still want to get
a set of Fourier-like basis functions for a fully diagonalized algebraic system \cite{ShenWang07}.

The main purpose  of this paper is to construct the   Fourier-like Sobolev orthogonal basis functions
\cite{FerMarPinXu,MarXu}  for elliptic boundary value problems on the half line $\Lambda=(0,\infty)$.  For this purpose, we shall first extend the definition of
 Laguerre polynomials $\big\{ \mathcal{L}^{\alpha}_k(x)\big\}_{ k\ge 0}$
 and Laguerre functions $\big\{ l^{\alpha,\beta}_k(x) = e^{-\frac{\beta}{2}x}  \mathcal{L}^{\alpha}_k(\beta x)  \big\}_{k\ge 0}$ for $\beta>0$  to allow $\alpha$ being any real number.
 The resulting generalized  Laguerre  functions
 are proven to be the eigenfunctions of certain high order  Sturm-Liouville differential operators (see \refl{GLFeigen} of this paper).
 Moreover, they  are  complete and mutually orthogonal in $H^r_{x^{r+\alpha}}(\Lambda)$
 for any nonnegative
 integer $r>-\alpha-1$  with respect to an equivalent Sobolev inner product (see \refe{2.27s} of this paper).

Since the problem is dependent  on the  inner product originated from
the coercive  bilinear form of the  elliptic equation, it does  not necessarily  coincide
with the equivalent Sobolev inner product,
further  efforts should
be  paid to obtain the Fourier-like basis functions for
a fully diagonalized spectral approximation,
in spite of the Sobolev orthogonality of $\big\{ l^{\alpha,\beta}_k(x)  \big\}_{k\ge 0}$.
Starting with $\big\{ l^{-1,\beta}_k(x)  \big\}_{k\ge 0}$,
stable and efficient algorithms are then proposed  to construct the Fourier-like basis functions
for the non-homogeneous Dirichlet and Robin boundary value problems of the second order
elliptic equations.
In the sequel, both the exact solution and the approximate solution can be represented  as
 infinite and truncated Fourier series in $\big\{ l^{-1,\beta}_k(x)  \big\}$, respectively.
Although the fully diagonalized spectral methods are studied   for second order equations,
they  can be  readily generalized  to solve $2r$-th order equations
by  starting with $\big\{ l^{-r,\beta}_k(x)  \big\}$.

An ideal spectral approximation to differential equations may guarantee an optimal error estimate in  its  convergence  analysis. To match this requirement,  various orthogonal projections involving  different orders of derivatives
and boundary conditions
  have been designed and studied case by case, which frequently make  the numerical analysis
in spectral method a  tedious
task.
Moreover, the  traditional routine to measure the approximation error is first to establish the norm defined by a second-order self-adjoint differential operator, and then estimate the upper bound of the approximation error  with the induced norms. However, this practical approach usually fails to characterize
the function space in which the orthogonal projection  has an optimal error estimate.

To conquer  these difficulties, we need a unified definition
of the orthogonal spectral projections with a systematic numerical  analysis.
Fortunately, the Sobolev orthogonality of the  generalized Laguerre functions $\big\{ l^{\alpha,\beta}_k(x)  \big\}$ with
a negative integer $\alpha=-n$  enables  us to  define the unified
 orthogonal projection $\pi^{-n,\beta}_N$ from $H^{n}_{x^{r-n}}(\Lambda)$
to the finite approximation space
for all nonnegative integer $r\ge n$, ignoring the specific value of $r$.
More importantly,  such an  orthogonal projection $\pi^{-n,\beta}_N$ interpolates
the endpoint function values up to the $(n-1)$-th derivative, i.e,
$\partial_x^{\ell}\pi^{-n,\beta}_Nu(0)= \partial_x^{\ell}u(0)$ for any $0 \le\ell \le n-1 $  and  $N\ge n$.
This endpoint interpolation property ensures $u-\pi^{-n,\beta}_Nu\in H_0^n(\Lambda)$,  thus makes $\pi^{-n,\beta}_N$  applicable to  both the Dirichlet and Robin boundary value problems, and available to  multi-domain spectral methods.
Besides, owing to the clarity of the orthogonality structure of the generalised Laguerre functions,
one can not only derive an optimal order of the convergence for the approximated  function, but also get a generic  characterization of the function space where  the orthogonal projection has an optimal error estimate.

Therefore, the second purpose of this paper is to establish such a unified orthogonal Laguerre projection, and apply
it to the convergence analysis on the fully diagonalized Laguerre spectral method
for both the Dirichlet and Robin boundary value problems of second order elliptic equations.

The remainder of the paper is organized as follows. In Section 2, we first make conventions on the
frequently used notations, and then introduce  generalized Laguerre polynomials and functions
with arbitrary index $\alpha$. The fully diagonalized Laguerre spectral methods and the implementation of
algorithms
are proposed in Section 3 for the Dirichlet and Robin boundary value problems of  second order elliptic equations. Section 4 is then devoted to the convergence analysis of
the unified orthogonal projection  together with  our   Laguerre spectral methods.
Finally, numerical results are presented in Section 5 to demonstrate the effectiveness
and accuracy of
the proposed diagonalized  Laguerre spectral methods, which are in agreement with
our theoretical predictions.

\section{Generalized Laguerre polynomials and functions}
\setcounter{equation}{0} \setcounter{lmm}{0} \setcounter{thm}{0}

\subsection{Notations and preliminaries} Let $\Om=(0,\infty)$ and
${\varpi}(x)$ be a weight function which is not necessary in $L^1(\Om)$. We define
$$ L^2_{\varpi}({\Om})=\{ v~|~v  \hbox{ is measurable on $ \Om$ and~} \|v\|_{\varpi}<\infty \},$$
 with the following inner product and norm,
$$(u,v)_{\varpi}=\dint_{\Om}u(x)v(x){\varpi}(x)dx,\quad\|v\|_{\varpi}=(v,v)^{\frac{1}{2}}_{\varpi},\quad
\forall u,v\in L_{\varpi}^2(\Om). $$ For simplicity, we denote
$\frac{d^kv}{dx^k}=\partial^k_x v$ and $\frac{dv}{dx}=v^\prime.$
 For any  integer  $ m\ge 0$, we define
$$ H_{\varpi}^m(\Om)=\{ v~|~\partial_x^k v\in L^2_{\varpi}(\Om),~ 0\leq k\leq m \}, $$
with the following semi-norm  and norm,
$$ |v|_{m, {\varpi}}=\|\partial_x^m v \|_{\varpi},\qquad \|v\|_{m,{\varpi}}=\Big(\dsum_{k=0}^m
|v|_{k,{\varpi}}^2\Big)^{\frac 1 2}.$$ For any real $r>0,$ we define the
space $ H^r_{\varpi}(\Om) $ and its norm $\|v\|_{r,{\varpi}}$ by function
space interpolation as in \cite{BL}. In cases where no confusion
arises,  ${\varpi}$ may be dropped from the notations whenever
${\varpi}(x)\equiv 1.$  Specifically, we shall use  the weight functions $w=w(x)=x$  and   $w^{\alpha}=w^{\alpha}(x)=x^{\alpha}$
in the subsequent sections.

We denote by $\mathbb{R}$ the collection of real numbers,   by $\mathbb{N}_0$ and  $\mathbb{Z}^-$ the collections of
nonnegative  and negative integers, respectively. Further, we let $\mathbb{P}_k$ be the space of   polynomials of degree  $\le k$.

 Let $\aleph:=\mathbb{Z}^-\cup (-1,+\infty)$. We also  define the characteristic functions $ \chi_n$ for $n\in \mathbb{N}_0$,
\begin{align*}
\chi_n(\alpha)=\begin{cases}
-\alpha, & \alpha+n\in \mathbb{Z}^-,\\
0, &  \alpha+n\in(-1,+\infty).
\end{cases}
\end{align*}
For short we write  $ \chi(\alpha)=\chi_0(\alpha)$.

\subsection{Generalized Laguerre polynomials}
It is well known that, for $\af>-1,$ the classical Laguerre polynomials ${\mathcal{L}}_k^{{\af}}(x),\ k=0,1,\dots,$ admit an explicit representation (see \cite{Szeg75}):
\bgeq{2.1s}
{\mathcal{L}}_k^{{\af}}(x)
=\dsum_{\nu=0}^k \frac{(\alpha+\nu+1)_{k-\nu} }{(k-\nu)!\nu!} (-x)^\nu,\qquad
 x\in \Lambda,\quad k\geq 0,
 \edeq
where we use the Pochhammer symbol
$(a)_{n}= a(a+1)\dots(a+n-1)$ for any $a\in \mathbb{R}$ and $n\in \mathbb{N}_0$.

The classical Laguerre polynomials can be extended to cases with any $\alpha\in \mathbb{R}$
and the same representation as \eqref{2.1s},  which are referred to as the
generalized Laguerre polynomials (cf. \cite{PP}). Obviously, the generalized Laguerre polynomials ${\mathcal{L}}_k^{{\af}}(x),\ k=0,1,\dots,$  constitute a complete basis for the
linear space of real polynomials as well, since $\deg {\mathcal{L}}^{\af}_k =k$ for all $k\geq 0$.

The generalized Laguerre polynomials fulfill the following
recurrence relations. \bglm{lm:2.1s}For any $\af\in \mathbb{R}$, it
holds \bgeq{2.5s}\left\{
\begin{array}{ll}
{\mathcal{L}}^{\af}_{0}(x)=1,\qquad\qquad {\mathcal{L}}^{\af}_{1}(x)=-x+\af+1,\nns
(k+1){\mathcal{L}}^{\af}_{k+1}(x)=(2k+\af+1-x){\mathcal{L}}^{\af}_k(x)-
(k+\af){\mathcal{L}}^{\af}_{k-1}(x),\qquad k\ge 1.
\end{array}\right.
\edeq
\edlm
\begin{proof} The recurrence relation \refe{2.5s} for $\alpha\in\mathbb{R}$ can be derived from those of the classic
Laguerre polynomials for $\alpha>-1$ by the continuation method. Here, we also give a concrete proof by the representation
\refe{2.1s}.
Using the expression \refe{2.1s}, we obtain that for integer $k\geq 1,$
$$
\begin{array}{ll}
(2k+\af+1){\mathcal{L}}^{\af}_k(x)-(k+\af){\mathcal{L}}^{\af}_{k-1}(x)
-(k+1){\mathcal{L}}^{\af}_{k+1}(x)
\nns=(2k+\af+1)\dsum_{\nu=0}^k \frac{(-1)^\nu\Gamma(k+\af+1)}{\nu!(k-\nu)!\Gamma(\af+\nu+1)}x^{\nu}
-(k+\af)\dsum_{\nu=0}^{k-1} \frac{(-1)^\nu\Gamma(k+\af)}{\nu!(k-\nu-1)!\Gamma(\af+\nu+1)}x^{\nu}\\
\quad-(k+1)\dsum_{\nu=0}^{k+1} \frac{(-1)^\nu\Gamma(k+\af+2)}{\nu!(k-\nu+1)!\Gamma(\af+\nu+1)}x^{\nu}.
\end{array}
$$
Then a direct computation shows that
\begin{align*}
(2k+\af&+1){\mathcal{L}}^{\af}_k(x)-(k+\af){\mathcal{L}}^{\af}_{k-1}(x)
-(k+1){\mathcal{L}}^{\af}_{k+1}(x)
\\
=&\,\dsum_{\nu=1}^{k+1} \dfrac{(-1)^{\nu-1}\Gamma(k+\af+1)}{(\nu-1)!(k-\nu+1)!\Gamma(\af+\nu)}x^\nu
= x {\mathcal{L}}^{\af}_{k}(x).
\end{align*}
The desired result is now derived.
\end{proof}

\bglm{lm:2.2s}For any $\af\in \mathbb{R}$ and $k\geq 0$, it holds
\begin{align}
\label{2.6s}
&{\mathcal{L}}^{\af}_k(x)={\mathcal{L}}^{\af+1}_k(x)-{\mathcal{L}}^{\af+1}_{k-1}(x),
\\
\label{2.7s}
&\partial_{x} {\mathcal{L}}^{\af}_k(x)=-{\mathcal{L}}^{\af+1}_{k-1}(x),
\\
\label{2.8s}
&x\partial_{x} {\mathcal{L}}^{\af}_k(x)=k{\mathcal{L}}^{\af}_{k}(x)-(k+\af){\mathcal{L}}^{\af}_{k-1}(x),
\\
\label{2.9s}
&{\mathcal{L}}^{\af}_k(x)=\partial_{x} {\mathcal{L}}^{\af}_k(x)-\partial_{x} {\mathcal{L}}^{\af}_{k+1}(x),
\end{align}
where ${\mathcal{L}}^{\af}_{k}(x)\equiv0$ for any $k\in
\mathbb{Z}^{-}.$ \edlm
\begin{proof} The recurrence relations \refe{2.6s}-\refe{2.8s} can be obtained readily by using similar arguments as in \refl{lm:2.1s}. Moreover, by \refe{2.6s} and \refe{2.7s}, it is easy to derive \refe{2.9s}.
\end{proof}

\bglm{lm:2.3s}
For any $\af\in \mathbb{R}$, the generalized Laguerre polynomials ${\mathcal{L}}^{\af}_{k}(x)$ satisfy the Sturm-Liouville equation
\bgeq{2.12s}
  x^{-\alpha}e^{x}\partial_{x}(x^{\af+1}e^{-x}\partial_{x}\mathcal{L}_{k}^{\af}(x))+\lambda_{k}\mathcal{L}_{k}^{\af}(x)=0,\quad k\geq 0,
\edeq
or equivalently,
\bgeq{2.13s}
x\partial_{x}^{2}\mathcal{L}_{k}^{\af}(x)+(\af+1-x)\partial_{x}\mathcal{L}_{k}^{\af}(x)
+\lambda_{k}\mathcal{L}_{k}^{\af}(x)=0,\quad k\geq 0,
\edeq
with the corresponding eigenvalue $\lbd_k=k$.
\edlm
\begin{proof}
Lemma \ref{lm:2.3s} can be proved  by  the continuation method from the Sturm-Liouville equation of the classic
Laguerre polynomials for $\alpha>-1$. Also one can  give a proof by  using the representation
\refe{2.1s}. We omit the details here.
\end{proof}

We are interested in those generalized Laguerre polynomials with an integer index
$\alpha\in \mathbb{Z}^-$.
\bglm{lm:2.4s}
For any $\af\in\mathbb{Z}^-$,  we have
\bgeq{2.15s}
\mathcal{L}_{k}^{\af}(x)=(-x)^{-\af}\frac{(k+\af)!}{k!}\mathcal{L}_{k+\af}^{-\af}(x),\qquad k\ge \chi(\alpha).
\edeq
And for  any $\alpha\in \aleph$,  the following orthogonality relation holds:
\bgeq{2.16s}
  \dint_{\Lambda}{\mathcal{L}}^{\af}_k(x)
  {\mathcal{L}}^{\af}_m(x)x^{\af}e^{-x}dx =\gm_k^{\af}\delta_{k,m}, \quad \gm_k^{\af}=\frac{\Gamma(k+\af+1)}{k!} ,\qquad k,m \ge  \chi(\alpha),\edeq
  where $\dt_{k,m}$ is the Kronecker symbol.
\edlm
\begin{proof} The identity \refe{2.15s} comes directly from \cite{Szeg75}. The  orthogonality  relation \refe{2.16s}
is known  for classic Laguerre polynomials with  $\alpha\in (-1,+\infty)$;  while  for  $\alpha\in \mathbb{Z}^-$,
\eqref{2.16s}
can be obtained immediately from \refe{2.15s} together with \eqref{2.16s} for $\alpha\in (-1,+\infty)$.
\end{proof}

We now conclude this subsection with some generalized Laguerre polynomials ${\mathcal{L}}^{\af}_k(x)$ for $\alpha\in \mathbb{Z}^-$.
{\renewcommand{\arraystretch}{1.35}
\addtolength{\arraycolsep}{0.4em}
\begin{align*}
\begin{array}{ccccccc}
                         &  k=0 &   k=1       &  k=2                       &  k=3                                  &  \dots    & k\ge \chi(\alpha)   \\ \hline
 \alpha=-1     &  1     &   -x          &  \tfrac12 x(x-2)       &  -\tfrac16  x (x^2-6x+ 6)    &  \dots    &  -\frac{1}{k} x  {\mathcal{L}}^{1}_{k-1}(x)     \\ \hline
 \alpha=-2     &   1    &   -x-1      &    \tfrac12 x^2         &   -\tfrac16 x^2 (x-3)            &  \dots    &  \frac{1}{k(k-1)} x^2 {\mathcal{L}}^{2}_{k-2}(x)            \\ \hline
 \alpha=-3     &  1    &     -x-2     &   \tfrac12 x^2+x+1  &   -\tfrac16 x^3                    &  \dots    &  -\frac{1}{k(k-1)(k-2)}  x^3 {\mathcal{L}}^{3}_{k-3}(x)           \\ \hline
 \dots            &  \dots  &   \dots  &  \dots                      &   \dots                                &  \dots    &  \dots
\end{array}
\end{align*}
\renewcommand{\arraystretch}{1}
\addtolength{\arraycolsep}{-0.4em}
}

\subsection{Generalized Laguerre functions} In this subsection, we shall introduce the generalized Laguerre functions with arbitrary parameters $\af\in\mathbb{R}$ and $\bt>0$
and present some properties.

The generalized Laguerre
functions $l_k^{\alpha,\beta}(x),$ $k\ge 0$ are defined by
\bgeq{2.17s}l_k^{\alpha,\beta}(x)=e^{-\frac{1}{2}{\beta}x}\mathcal{L}_k^{\alpha}(\beta x),\qquad
\forall\af\in \mathbb{R},\quad\bt>0,\edeq
and the multiplication of
$e^{-\frac12\beta x}$ and the leading term of $ \mathcal{L}_k^{\alpha}(\beta x)$
is simply referred to as the leading term of $l^{\alpha,\beta}_k(x)$.

According to \refe{2.15s}, for any $\af\in\mathbb{Z}^{-}$, we have
\bgeq{2.18s}
l_{k}^{\af,\bt}(x)=(-\bt x)^{-\af}\frac{(k+\af)!}{k!}l_{k+\af}^{-\af,\bt}(x), \quad k\ge \chi(\alpha),
\edeq
which means that   $x=0$  is a zero  of $l_k^{\alpha,\beta}(x)$  with the multiplicity $-\alpha$, i.e.,
\begin{align}
\label{homoBnd}
\partial_x^{\nu}l_k^{\alpha,\beta}(0)=0,\quad k\ge \chi(\alpha), \quad \nu=0,1,\dots,-\alpha-1.
\end{align}

Due to \refe{2.5s}-\refe{2.9s},
the generalized Laguerre functions satisfy the following recurrence
relations:
\begin{lmm}
For any $k\in \mathbb{N}_{0}$,  it holds that
\begin{align}
\label{2.21s}
& {\beta} x  l_k^{\alpha,\beta}(x) =
- (k+1)l_{k+1}^{\alpha,\beta}(x)+(2k+\alpha+1)l_k^{\alpha,\beta}(x)-(k+\alpha)l_{k-1}^{\alpha,\beta}(x),
\\
 \label{2.22s}
&l^{\af,\bt}_k(x)=l^{\af+1,\bt}_k(x)-l^{\af+1,\bt}_{k-1}(x),
\\
\label{2.23s}
&\partial_x l_k^{\alpha,\beta}(x)=-\beta l_{k-1}^{\alpha+1,\beta}(x)-\frac{\beta}{2} l_k^{\alpha,\beta}(x)
= -\frac{\beta}{2}\big[ l_k^{\alpha+1,\beta}(x) + l_{k-1}^{\alpha+1,\beta}(x)\big],
\\
\label{2.24s}
\begin{split}
        &x\partial_x l_k^{\alpha,\beta}(x)
                             =\frac{k+1}{2} l_{k+1}^{\alpha,\beta}(x) - \frac{\alpha+1}{2}l_k^{\alpha,\beta}(x)
                       - \frac{k+\alpha}{2} l_{k-1}^{\alpha,\beta}(x),
\end{split}\\
\label{2.25s}
\begin{split}
\partial_x l_k^{\alpha,\beta}(x)-\partial_x l_{k+1}^{\alpha,\beta}(x)=\frac{1}{2}\beta(l_k^{\alpha,\beta}(x)+l_{k+1}^{\alpha,\beta}(x)).
\end{split}
 \end{align}
 Hereafter, we use the convention that $l_k^{\alpha,\beta}(x)=0$  whenever $k\in \mathbb{Z}^{-}$.
 \end{lmm}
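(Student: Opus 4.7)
The plan is to derive each of \refe{2.21s}--\refe{2.25s} directly from the definition \refe{2.17s}, namely $l_k^{\alpha,\beta}(x)=e^{-\frac{\beta}{2}x}\mathcal{L}_k^{\alpha}(\beta x)$, by substituting $y=\beta x$ into the corresponding identity for the generalized Laguerre polynomials from \refl{lm:2.1s} or \refl{lm:2.2s} and then either multiplying through by $e^{-\frac{\beta}{2}x}$ or applying the product rule when a derivative is involved.

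The algebraic recurrences are immediate: \refe{2.21s} follows by setting $y=\beta x$ in the three-term recurrence \refe{2.5s} and multiplying both sides by $e^{-\frac{\beta}{2}x}$, while \refe{2.22s} comes the same way from \refe{2.6s}. For the differentiation formulas, the key ingredient is the product-rule identity
\[
\partial_x l_k^{\alpha,\beta}(x)=-\tfrac{\beta}{2}\,l_k^{\alpha,\beta}(x)+\beta e^{-\frac{\beta}{2}x}\bigl(\partial_y \mathcal{L}_k^{\alpha}\bigr)(\beta x).
\]
Combining this with \refe{2.7s} yields the first equality in \refe{2.23s}, and a further application of \refe{2.6s} (applied to $\mathcal{L}_{k-1}^{\alpha+1}$) converts it to the symmetric second form.

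The longest computation is \refe{2.24s}. I would multiply the displayed product-rule identity by $x$, use \refe{2.8s} in the form $(\beta x)(\partial_y \mathcal{L}_k^{\alpha})(\beta x)=k\mathcal{L}_k^{\alpha}(\beta x)-(k+\alpha)\mathcal{L}_{k-1}^{\alpha}(\beta x)$ to eliminate the derivative, and then invoke \refe{2.21s} to replace the residual $\beta x\,l_k^{\alpha,\beta}$ by a linear combination of $l_{k\pm 1}^{\alpha,\beta}$ and $l_k^{\alpha,\beta}$; collecting coefficients recovers the stated combination. Finally, for \refe{2.25s}, I would rewrite the product-rule identity as $\beta e^{-\frac{\beta}{2}x}(\partial_y\mathcal{L}_k^{\alpha})(\beta x)=\partial_x l_k^{\alpha,\beta}+\tfrac{\beta}{2}l_k^{\alpha,\beta}$, subtract the analogous identity at index $k+1$, and then apply \refe{2.9s} to turn the right-hand side into $\beta\,l_k^{\alpha,\beta}$; a one-line rearrangement then gives the claimed identity.

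I do not anticipate a genuine obstacle: every identity reduces to a single substitution plus routine algebra, and the conventional vanishing $l_k^{\alpha,\beta}\equiv 0$ for $k\in\mathbb{Z}^-$ is inherited from the analogous convention in \refl{lm:2.2s}. The only step that deserves some bookkeeping care is \refe{2.24s}, where the cancellations between \refe{2.8s} and \refe{2.21s} must be tracked precisely to arrive at the coefficients $\tfrac{k+1}{2}$, $-\tfrac{\alpha+1}{2}$, and $-\tfrac{k+\alpha}{2}$.
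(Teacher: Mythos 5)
Your proposal is correct and follows exactly the route the paper intends: the paper gives no detailed proof, simply asserting that the lemma follows from \refe{2.5s}--\refe{2.9s}, and your plan of substituting $y=\beta x$ into those polynomial identities, multiplying by $e^{-\frac{\beta}{2}x}$, and using the product rule $\partial_x l_k^{\alpha,\beta}=-\tfrac{\beta}{2}l_k^{\alpha,\beta}+\beta e^{-\frac{\beta}{2}x}(\partial_y\mathcal{L}_k^{\alpha})(\beta x)$ supplies precisely the omitted details (I checked the bookkeeping in \refe{2.24s} and \refe{2.25s}; the coefficients come out as stated). The only nitpick is descriptive: the passage to the symmetric form of \refe{2.23s} uses \refe{2.22s} applied to $l_k^{\alpha,\beta}$ itself (i.e.\ \refe{2.6s} for $\mathcal{L}_k^{\alpha}$) rather than to $\mathcal{L}_{k-1}^{\alpha+1}$, but this does not affect the validity of the argument.
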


The generalized Laguerre functions are eigenfunctions of certain singular Sturm-Liouville differential  operators.
\begin{lmm} For any $n\in \mathbb{N}_0$, it holds that
\label{GLFeigen}
\begin{align}
\label{2.20s}
  \sum_{\nu=0}^{n}(-1)^{\nu} \binom{n}{\nu}  \frac{\beta^{2n-2\nu}}{2^{2n-2\nu}} x^{-\alpha} \partial_x^{\nu}
  \Big( x^{\alpha+n} \partial_x^{\nu} l^{\alpha,\beta}_k  \Big) =
 \frac{\beta^{n}}{2^{n}}     \lambda_{k,n}^{\alpha}  l^{\alpha,\beta}_k,
  \quad k\ge 0,
\end{align}
where  $ \lambda^{\alpha}_{k,n}$ satisfies the following recurrence relation,
 \begin{align}
 \label{eigenInduct}
 \lambda^{\alpha}_{k,0}=1,\qquad
 \lambda^{\alpha}_{k,n}= (k+\alpha+1)  \lambda^{\alpha+1}_{k,n-1}
 +k  \lambda^{\alpha+1}_{k-1,n-1}, \quad n\ge 1, \; k\ge 0.
 \end{align}
 \end{lmm}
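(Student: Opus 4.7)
The argument is by induction on $n$. The base case $n=0$ is immediate since the operator reduces to $u\mapsto x^{-\alpha}\cdot x^{\alpha}u=u$ and $\lambda^{\alpha}_{k,0}=1$. Let $\mathcal{T}_n^{\alpha}$ denote the differential operator on the left-hand side of \eqref{2.20s}. The heart of the induction is the operator identity
\begin{equation*}
\mathcal{T}_n^{\alpha}u \;=\; \tfrac{\beta^{2}}{4}\,x\,\mathcal{T}_{n-1}^{\alpha+1}u \;-\; x^{-\alpha}\partial_x\!\Big[x^{\alpha+1}\,\mathcal{T}_{n-1}^{\alpha+1}(\partial_x u)\Big],
\end{equation*}
which I would derive by applying Pascal's rule $\binom{n}{\nu}=\binom{n-1}{\nu}+\binom{n-1}{\nu-1}$ inside the sum defining $\mathcal{T}_n^{\alpha}$. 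The $\binom{n-1}{\nu}$-portion becomes $\tfrac{\beta^{2}}{4}x\,\mathcal{T}_{n-1}^{\alpha+1}u$ at once, using $x^{\alpha+n}=x\cdot x^{(\alpha+1)+(n-1)}$ so that the weight fits the $\mathcal{T}_{n-1}^{\alpha+1}$ template. For the $\binom{n-1}{\nu-1}$-portion, the index shift $\nu\mapsto \mu+1$ together with the factorisation $\partial_x^{\mu+1}(x^{\alpha+n}\partial_x^{\mu+1}u)=\partial_x\bigl[\partial_x^{\mu}(x^{\alpha+n}\partial_x^{\mu}(\partial_x u))\bigr]$ let one extract an outer $\partial_x$ and an $x^{\alpha+1}$, exactly reconstructing $-x^{-\alpha}\partial_x\bigl[x^{\alpha+1}\mathcal{T}_{n-1}^{\alpha+1}(\partial_x u)\bigr]$.

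With the identity in hand I would specialise $u=l_k^{\alpha,\beta}$, expand $l_k^{\alpha,\beta}=l_k^{\alpha+1,\beta}-l_{k-1}^{\alpha+1,\beta}$ by \eqref{2.22s} and $\partial_x l_k^{\alpha,\beta}=-\tfrac{\beta}{2}\bigl(l_k^{\alpha+1,\beta}+l_{k-1}^{\alpha+1,\beta}\bigr)$ by \eqref{2.23s}, and apply the inductive hypothesis to each $l_j^{\alpha+1,\beta}$ for $j\in\{k,k-1\}$. The result is $(\beta/2)^{n}$ times
\begin{equation*}
\lambda^{\alpha+1}_{k,n-1}\Bigl[\tfrac{\beta}{2}x\,l_k^{\alpha+1,\beta}+x^{-\alpha}\partial_x(x^{\alpha+1}l_k^{\alpha+1,\beta})\Bigr]+\lambda^{\alpha+1}_{k-1,n-1}\Bigl[-\tfrac{\beta}{2}x\,l_{k-1}^{\alpha+1,\beta}+x^{-\alpha}\partial_x(x^{\alpha+1}l_{k-1}^{\alpha+1,\beta})\Bigr].
\end{equation*}
Expanding $xl_j^{\alpha+1,\beta}$ via \eqref{2.21s} and $x^{-\alpha}\partial_x(x^{\alpha+1}l_j^{\alpha+1,\beta})=(\alpha+1)l_j^{\alpha+1,\beta}+x\partial_x l_j^{\alpha+1,\beta}$ via \eqref{2.24s}, I anticipate a clean cancellation of the $l_{k+1}^{\alpha+1,\beta}$ and $l_{k-2}^{\alpha+1,\beta}$ contributions, so that the two brackets collapse to $(k+\alpha+1)\bigl(l_k^{\alpha+1,\beta}-l_{k-1}^{\alpha+1,\beta}\bigr)$ and $k\bigl(l_k^{\alpha+1,\beta}-l_{k-1}^{\alpha+1,\beta}\bigr)$ respectively, each of which equals the corresponding multiple of $l_k^{\alpha,\beta}$ after one more use of \eqref{2.22s}. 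Pairing these with the recurrence \eqref{eigenInduct} yields $(\beta/2)^{n}\lambda^{\alpha}_{k,n}l_k^{\alpha,\beta}$ on the nose.

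The main obstacle is spotting and checking the operator identity above; once that is in place, the inductive computation is routine bookkeeping with the three-term recurrences, and the clean collapse of the two brackets is essentially forced by — and conversely confirms — the structure of the coefficients $(k+\alpha+1)$ and $k$ in \eqref{eigenInduct}. As a sanity check I would first verify the case $n=1$, where the identity reduces to $\mathcal{T}_1^{\alpha}u=\tfrac{\beta^{2}}{4}xu-x^{-\alpha}\partial_x(x^{\alpha+1}\partial_x u)$ and $\lambda^{\alpha}_{k,1}=2k+\alpha+1$, consistent with the Sturm-Liouville equation \eqref{2.13s} after the substitution $l_k^{\alpha,\beta}(x)=e^{-\beta x/2}\mathcal{L}_k^{\alpha}(\beta x)$.
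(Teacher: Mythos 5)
Your proposal is correct and follows essentially the same route as the paper's proof: induction on $n$ via Pascal's rule applied to the binomial coefficients, splitting the operator into the $\frac{\beta^2}{4}x\,\mathcal{T}_{n-1}^{\alpha+1}$ part and the $-x^{-\alpha}\partial_x\circ x^{\alpha+1}\circ\mathcal{T}_{n-1}^{\alpha+1}\circ\partial_x$ part, then using \eqref{2.22s}, \eqref{2.23s} to pass to index $\alpha+1$ and \eqref{2.21s}, \eqref{2.24s} to collapse the brackets to $(k+\alpha+1)l_k^{\alpha,\beta}$ and $k\,l_k^{\alpha,\beta}$. Your only (cosmetic) difference is packaging the decomposition as an explicit operator identity before substituting $u=l_k^{\alpha,\beta}$, whereas the paper substitutes the recurrences directly into the split sums.
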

\begin{proof} We prove \eqref{2.20s} and \eqref{eigenInduct} by induction.
It is obvious that \eqref{2.20s} holds for $n=0$. Moreover, by virtue of \refe{2.12s} and \refe{2.17s} we have
\begin{align*}
-x^{-\alpha} \partial_{x} \Big(  x^{\alpha+1} \partial_{x}l_k^{\alpha,\beta}(x)\Big)
+\frac{\beta^2}{4}x l_k^{\alpha,\beta}(x)= \frac{\beta}{2} (2k+\alpha+1) l_k^{\alpha,\beta}(x)
=  \frac{\beta}{2} \lambda^{\alpha}_{k,1} l_k^{\alpha,\beta}(x),\quad k\geq 0,
\end{align*}
which gives \eqref{2.20s} and \eqref{eigenInduct} for $n=1$.

We now assume that  \eqref{2.20s} and \eqref{eigenInduct}  hold for an integer $n\ge 1$. Then
by the  recursive formula of  binomial coefficients
together with \eqref{2.22s} and \eqref{2.23s},
\begin{align*}
I:=&\sum_{\nu=0}^{n+1}(-1)^{\nu} \binom{n+1}{\nu}   \frac{\beta^{2n+2-2\nu}}{2^{2n+2-2\nu}}  x^{-\alpha} \partial_x^{\nu}
  \Big( x^{n+1+\alpha} \partial_x^{\nu} l^{\alpha,\beta}_k  \Big)
\\
 =&\sum_{\nu=0}^{n+1}(-1)^{\nu} \left[ \binom{n}{\nu}+ \binom{n}{\nu-1}\right]   \frac{\beta^{2n+2-2\nu}}{2^{2n+2-2\nu}}  x^{-\alpha} \partial_x^{\nu}
  \Big( x^{n+1+\alpha} \partial_x^{\nu} l^{\alpha,\beta}_k \Big)
\\
 =& \frac{\beta^2}{4} x \sum_{\nu=0}^{n}(-1)^{\nu} \binom{n}{\nu}   \frac{\beta^{2n-2\nu}}{2^{2n-2\nu}}   x^{-(\alpha+1)} \partial_x^{\nu}
  \Big[ x^{n+(\alpha+1)} \partial_x^{\nu}\Big(  l^{\alpha+1,\beta}_k -  l^{\alpha+1,\beta}_{k-1} \Big) \Big]
  \\
& +\frac{\beta}{2} x^{-\alpha} \partial_x \sum_{\nu=1}^{n+1}(-1)^{\nu-1} \binom{n}{\nu-1}   \frac{\beta^{2n-2(\nu-1) }}{2^{2n-2(\nu-1)} } \partial_x^{\nu-1}
  \Big[ x^{n+(\alpha+1)} \partial_x^{\nu-1} \left(  l^{\alpha+1,\beta}_k + l^{\alpha+1,\beta}_{k-1} \right)  \Big].
\end{align*}
Thus by the induction assumption, \eqref{2.21s}, \eqref{2.24s} and \eqref{2.22s}, we derive that
\begin{align*}
 I=&    \frac{\beta^{n+2}}{2^{n+2}}  x \left[\lambda^{\alpha+1}_{k,n} l^{\alpha+1,\beta}_{k} - \lambda^{\alpha+1}_{k-1,n} l^{\alpha+1,\beta}_{k-1} \right]
+  \frac{\beta^{n+1}}{2^{n+1}}  x^{-\alpha}\partial_x \left[ x^{\alpha+1}\left( \lambda^{\alpha+1}_{k,n} l^{\alpha+1,\beta}_{k} +  \lambda^{\alpha+1}_{k-1,n} l^{\alpha+1,\beta}_{k-1}\right) \right]
\\
=&\frac{\beta^{n+1}}{2^{n+1}}  \lambda^{\alpha+1}_{k,n} \left[ (\alpha+1+ \tfrac{\beta}{2} x ) l^{\alpha+1,\beta}_{k}
+ x \partial_x l^{\alpha+1,\beta}_{k}\right]
+ \frac{\beta^{n+1}}{2^{n+1}} \lambda^{\alpha+1}_{k-1,n} \left[ ( \alpha+1-\tfrac{\beta}{2} x ) l^{\alpha+1,\beta}_{k-1}
+ x \partial_x l^{\alpha+1,\beta}_{k-1}\right]
\\
=& \frac{\beta^{n+1}}{2^{n+1}}  \lambda^{\alpha+1}_{k,n} (k+\alpha+1)  \big[ l^{\alpha+1,\beta}_{k}-  l^{\alpha+1,\beta}_{k-1}\big]
+ \frac{\beta^{n+1}}{2^{n+1}}  \lambda^{\alpha+1}_{k-1,n} k  \big[ l^{\alpha+1,\beta}_{k}-  l^{\alpha+1,\beta}_{k-1}\big]
\\
=&\frac{\beta^{n+1}}{2^{n+1}}  \big[ (k+\alpha+1)  \lambda^{\alpha+1}_{k,n} +  k\lambda^{\alpha+1}_{k-1,n}\big]   l^{\alpha,\beta}_{k},
\end{align*}
which is exactly  \eqref{2.20s} and \eqref{eigenInduct} with $n+1$ in place of $n$.
This ends the proof.
\end{proof}

For any $n\in \mathbb{N}_0$, $\alpha\in \mathbb{R}$ and  $\beta>0$,  define the bilinear form on $H^n_{w^{\alpha+n}}(\Lambda)\times H^n_{w^{\alpha+n}}(\Lambda)$,
\begin{align}
\label{innProd}
a^{\alpha,\beta}_{n}(u,v) = \sum_{\nu=0}^n  \binom{n}{\nu} \frac{\beta^{2n-2\nu} }{2^{2n-2\nu} } (\partial_x^{\nu} u,
 \partial_x^{\nu} v)_{w^{\af+n}}.
\end{align}
It is obvious that $a^{\alpha,\beta}_{n}(\cdot,\cdot) $ is an inner product on $H^n_{w^{\alpha+n}}(\Lambda)$
if  $\alpha+n>-1$.
 \begin{thm}
 \label{lm:2.5s}
The generalized Laguerre functions $l_k^{\alpha,\beta}(x), \, k\ge \chi(\alpha) $ for $\alpha\in \aleph$ are mutually orthogonal with respect to the weight function $w{^\af}$,
 \bgeq{2.26s}
 a^{\alpha,\beta}_{0}(l^{\af,\bt}_k,l^{\af,\bt}_m) =
(l^{\af,\bt}_k,  l^{\af,\bt}_m)_{w^{\af}} =\bt^{-\af-1} \gm_k^{\af}\delta_{k,m}, \qquad k,m\ge \chi(\alpha).\edeq
More generally, for any $n\in \mathbb{N}_0$ and $\alpha+n\in \aleph$,
 \begin{align}
 \label{2.27s}
a^{\alpha,\beta}_{n}(l^{\af,\bt}_k,l^{\af,\bt}_m) =\beta^{n-\alpha-1} \gamma^{\alpha}_{k,n}\delta_{k,m}, \qquad k,m\ge \chi_n(\alpha),
\end{align}
where the positive numbers  $\gamma^{\alpha}_{k,n}$  satisfy the recurrence relation
 \begin{align}
  \label{2.28s}
   \gamma^{\alpha+n}_{k,0}=\gamma^{\alpha+n}_{k},\qquad
 \gamma^{\alpha}_{k,n}
 = \frac{1}{2} \big[ \gamma^{\alpha+1}_{k,n-1}+ \gamma^{\alpha+1}_{k-1,n-1}  \big], \qquad n\ge 1,
\end{align}
under the convention that $\gamma^{\alpha}_{k,n}=0$ whenever $k\in \mathbb{Z}^-$.
\end{thm}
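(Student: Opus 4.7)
The plan is to prove the orthogonality identity \eqref{2.27s} together with the coefficient recurrence \eqref{2.28s} simultaneously by induction on $n$. The base case $n=0$ is precisely \eqref{2.26s}, which follows from the polynomial orthogonality \eqref{2.16s} under the substitution $y=\beta x$: the factor $x^{\alpha}\,dx$ becomes $\beta^{-\alpha-1} y^{\alpha}\,dy$, producing the prefactor $\beta^{-\alpha-1}$; consistency with \eqref{2.28s} at $n=0$ gives $\gamma_{k,0}^{\alpha}=\gamma_k^{\alpha}$ as required.

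The key observation for the inductive step is the purely algebraic decomposition
\begin{align*}
a_n^{\alpha,\beta}(u,v) = a_{n-1}^{\alpha+1,\beta}(\partial_x u,\partial_x v) + \tfrac{\beta^2}{4}\,a_{n-1}^{\alpha+1,\beta}(u,v),
\end{align*}
valid for all admissible $u,v$. I would establish this by applying Pascal's identity $\binom{n}{\nu}=\binom{n-1}{\nu-1}+\binom{n-1}{\nu}$ inside the sum \eqref{innProd}, splitting into two pieces, reindexing the first by $\mu=\nu-1$, and observing that the weight $w^{\alpha+n}$ coincides with $w^{(\alpha+1)+(n-1)}$; each of the resulting sums is then precisely $a_{n-1}^{\alpha+1,\beta}$, up to the overall factor $\beta^2/4$ on the second.

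I would then substitute $u=l_k^{\alpha,\beta}$ and $v=l_m^{\alpha,\beta}$ and eliminate the index $\alpha$ via the contiguous relations: by \eqref{2.22s}, $l_k^{\alpha,\beta}=l_k^{\alpha+1,\beta}-l_{k-1}^{\alpha+1,\beta}$; by \eqref{2.23s}, $\partial_x l_k^{\alpha,\beta}=-\tfrac{\beta}{2}\bigl(l_k^{\alpha+1,\beta}+l_{k-1}^{\alpha+1,\beta}\bigr)$. Expanding both pieces bilinearly and adding, a parallelogram-law cancellation wipes out the four cross pairings $a_{n-1}^{\alpha+1,\beta}(l_k^{\alpha+1,\beta},l_{m-1}^{\alpha+1,\beta})$ and $a_{n-1}^{\alpha+1,\beta}(l_{k-1}^{\alpha+1,\beta},l_m^{\alpha+1,\beta})$, leaving
\begin{align*}
a_n^{\alpha,\beta}(l_k^{\alpha,\beta},l_m^{\alpha,\beta}) = \tfrac{\beta^2}{2}\Bigl[a_{n-1}^{\alpha+1,\beta}(l_k^{\alpha+1,\beta},l_m^{\alpha+1,\beta}) + a_{n-1}^{\alpha+1,\beta}(l_{k-1}^{\alpha+1,\beta},l_{m-1}^{\alpha+1,\beta})\Bigr].
\end{align*}
Invoking the induction hypothesis on each term on the right and collecting $\delta_{k-1,m-1}=\delta_{k,m}$ together with the $\beta$-powers yields \eqref{2.27s} with $\gamma_{k,n}^{\alpha}$ satisfying exactly the recurrence \eqref{2.28s}.

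The main piece of book-keeping will be verifying that the admissible index range propagates correctly: when $k,m\ge\chi_n(\alpha)$, I must check that $k-1,m-1\ge\chi_{n-1}(\alpha+1)$, or else that both $l_{k-1}^{\alpha+1,\beta}=0$ and $\gamma_{k-1,n-1}^{\alpha+1}=0$ vanish by convention so that the missing contribution is a harmless zero on both sides. This decomposes into the two sub-cases $\alpha+n\in\mathbb{Z}^-$ and $\alpha+n\in(-1,+\infty)$ in the definition of $\chi_n$ and is a short verification in each. Notably, neither integration by parts nor the Sturm--Liouville content of \refl{GLFeigen} is needed; the entire argument rests on Pascal's identity, the contiguous relations \eqref{2.22s}--\eqref{2.23s}, and the base-case orthogonality \eqref{2.26s}.
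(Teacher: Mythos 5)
Your proposal is correct and follows essentially the same route as the paper: the paper's equation \eqref{2.28z} is obtained exactly by the Pascal-identity split of $\binom{n+1}{\nu}$ combined with \eqref{2.22s} and \eqref{2.23s}, producing the same reduction $a^{\alpha,\beta}_{n}(l^{\af,\bt}_k,l^{\af,\bt}_m)=\tfrac{\beta^2}{2}\bigl[a^{\alpha+1,\beta}_{n-1}(l^{\alpha+1,\bt}_k,l^{\alpha+1,\bt}_m)+a^{\alpha+1,\beta}_{n-1}(l^{\alpha+1,\bt}_{k-1},l^{\alpha+1,\bt}_{m-1})\bigr]$ via the same cross-term cancellation, followed by the same induction on $n$ with the same two-case index bookkeeping. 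Your only (minor, harmless) variation is to state the bilinear-form decomposition for general $u,v$ before specializing to the Laguerre functions, whereas the paper computes directly with $l^{\af,\bt}_k$.
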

\begin{proof}
The orthogonality \eqref{2.26s}  is an immediate consequence of \eqref{2.16s}.
Meanwhile, the  recursive formula \refe{innProd} of  binomial coefficients together with
\eqref{2.22s} and \eqref{2.23s} yields
  \begin{align}\label{2.28z}
a^{\alpha,\beta}_{n+1}&\big(l^{\af,\bt}_k,l^{\af,\bt}_m\big)
=  \sum_{\nu=0}^{n+1}  \Big[\binom{n}{\nu-1}+ \binom{n}{\nu} \Big]  \frac{\beta^{2n+2-2\nu} }{2^{2n+2-2\nu} }\big( \partial_x^{\nu} l^{\af,\bt}_k,
 \partial_x^{\nu} l^{\af,\bt}_m\big)_{w^{\af+n+1}}\nonumber
 \\
=&\,
 \frac{\beta^2}{4}  \sum_{\nu=1}^{n+1}  \binom{n}{\nu-1}  \frac{\beta^{2n-2(\nu-1)}}{2^{2n-2(\nu-1)}} \big( \partial_x^{\nu-1} \big[l^{\af+1,\bt}_k +l^{\af+1,\bt}_{k-1}  \big],
 \partial_x^{\nu-1}  \big[l^{\af+1,\bt}_m  +l^{\af+1,\bt}_{m-1}  \big]\big)_{w^{n+(\alpha+1)}}\nonumber
 \\
 &\, +
  \frac{\beta^2}{4}  \sum_{\nu=0}^n  \binom{n}{\nu}  \frac{\beta^{2n-2\nu}}{2^{2n-2\nu}} \big (\partial_x^{\nu}
 \big[ l^{\af+1,\bt}_k-l^{\af+1,\bt}_{k-1}\big],
 \partial_x^{\nu}  \big[ l^{\af+1,\bt}_m-l^{\af+1,\bt}_{m-1}\big] \big)_{w^{n+(\alpha+1)}}
 \\
 =&\, \frac{\beta^2}{2}  \sum_{\nu=0}^n  \binom{n}{\nu}  \frac{\beta^{2n-2\nu}}{2^{2n-2\nu}}
 \big[ \big(\partial_x^{\nu}  l^{\af+1,\bt}_k, \partial_x^{\nu}   l^{\af+1,\bt}_m\big)_{w^{n+(\alpha+1)}}
  + \big(\partial_x^{\nu} l^{\af+1,\bt}_{k-1}, \partial_x^{\nu} l^{\af+1,\bt}_{m-1}\big)_{w^{n+(\alpha+1)}}  \big]
  \nonumber\\
 =&\, \frac{\beta^2}{2} \big[ a^{\alpha+1,\beta}_{n}\big(l^{\alpha+1,\bt}_k,l^{\alpha+1,\bt}_m\big) + a^{\alpha+1,\beta}_{n}\big(l^{\alpha+1,\bt}_{k-1},l^{\alpha+1,\bt}_{m-1}\big)  \big].\nonumber
  \end{align}

To complete the proof of  \refe{2.27s}, we proceed by induction on $n.$
 By \refe{2.28z} we get
    \begin{align}\label{2.29z}a^{\alpha,\beta}_{1}\big(l^{\af,\bt}_k,l^{\af,\bt}_m\big)= \frac{\beta^2}{2} \big[  \beta^{-(\alpha+1)-1}
    \gamma^{\alpha+1}_{k,0}  + \beta^{-(\alpha+1)-1}
    \gamma^{\alpha+1}_{k-1,0}   \big]\delta_{k,m}
   =   \frac{\beta^{-\alpha} }{2} \big[
    \gamma^{\alpha+1}_{k,0}  +
    \gamma^{\alpha+1}_{k-1,0}   \big]\delta_{k,m},
    \end{align}
if either  (a). $\alpha+1\in \{-1,-2,-3,\cdots\} $ and $k,m\geq -\alpha$; or (b). $\alpha+1\in (-1,+\infty)$ and $k,m\geq 0. $  This exactly gives
\eqref{2.27s} for $k,m\ge \chi_{1}(\alpha)$ with $n=1$.

Assume that the result \refe{2.27s} for $k,m\ge \chi_{n}(\alpha)$ with $n=p$ holds. We now verify the result with $n=p+1.$ Clearly, by \refe{2.28z} we have
    \begin{align*}a^{\alpha,\beta}_{p+1}&\big(l^{\af,\bt}_k,l^{\af,\bt}_m\big)= \frac{\beta^2}{2} \big[  \beta^{p-(\alpha+1)-1}
    \gamma^{\alpha+1}_{k,p}  + \beta^{p-(\alpha+1)-1}
    \gamma^{\alpha+1}_{k-1,p}   \big]\delta_{k,m}
   =    \frac{\beta^{p-\alpha}}{2} \big[
    \gamma^{\alpha+1}_{k,p}  +
    \gamma^{\alpha+1}_{k-1,p}   \big]\delta_{k,m},\end{align*}
    if   either (a). $\alpha+1 \in \{ -p-1,-p-2,\cdots  \} $ and  $k,m\ge  -\alpha  $;  or (b). $\alpha+1>-p-1 $ and   $k,m\ge 0$.
    This statement implies the result \refe{2.27s} for $k,m\ge \chi_{n+1}(\alpha)$ with $n=p+1.$ This ends the proof.
\end{proof}

The normalization constants $\gamma^{\alpha}_{k,n}$ and the eigenvalues  $\lambda^{\alpha}_{k,n}$ are closely related. In effect, for any $\alpha\in \aleph$, $  k\ge \chi(\alpha),\, n\in \mathbb{N}_0$,
we get  that
\begin{align}
\label{2.29s}
\begin{split}
\gamma^{\alpha}_{k,n} \overset{\eqref{2.27s}}=&\, \beta^{\alpha+1-n}  a^{\alpha,\beta}_n(l^{\alpha,\beta}_k, l^{\alpha,\beta}_k)
\overset{\eqref{innProd}} =  \beta^{\alpha+1-n} \sum_{\nu=0}^{n} \binom{n}{\nu} \frac{\beta^{2n-2\nu}}{2^{2n-2\nu}}
\big(\partial_x^{\nu}l^{\alpha,\beta}_k, \partial_x^{\nu}l^{\alpha,\beta}_k\big)_{w^{\alpha+n}}
\\
=&\,\beta^{\alpha+1-n}   \sum_{\nu=0}^{n} (-1)^{\nu}\binom{n}{\nu} \frac{\beta^{2n-2\nu}}{2^{2n-2\nu}}
\big(  \partial_x^{\nu}\big[ w^{\alpha+n} \partial_x^{\nu}l^{\alpha,\beta}_k\big], l^{\alpha,\beta}_k\big)
\\
\overset{\eqref{2.20s}}=&\,\beta^{\alpha+1-n}  \frac{\beta^n}{2^n}  \lambda^{\alpha}_{k,n}
\big( l^{\alpha,\beta}_k,  l^{\alpha,\beta}_k\big)_{w^{\alpha}}
  \overset{\eqref{2.26s}}=\frac{1}{2^n}  \lambda^{\alpha}_{k,n} \gamma^{\alpha}_k,
 \end{split}
\end{align}
where the third inequality sign is obtained by integration by parts combined with \eqref{homoBnd}.

Moreover,  for sufficiently large $k$, an induction procedure starting with \eqref{eigenInduct} reveals
   \begin{align}
   \label{eigenEsti}
    \lambda^{\alpha}_{k,n}= (2k+\alpha+1)^n + \mathcal{O}(k^{n-2}),
   \end{align}
   which  implies
   \begin{align}
   \label{ratio}
 \frac{\gamma^{\alpha}_{k,n}}{\gamma^{\alpha}_{k,n+1}}  = \frac{2}{2k+\alpha+1}  + \mathcal{O}(k^{-3}).
 \end{align}
 The following eigenvalues $\lambda^{-n}_{k,n}$ and normalization constants $\gamma^{-n}_{k,n}$  are of our particular interest,
 \begin{align}
  \label{eigenExplcit}
  &\lambda^{0}_{k,0} = 1,\qquad \qquad\quad
\lambda^{-n}_{k,n} = 2^n (k-n+1)_n, \qquad k,n \in \mathbb{N}_0,
\\
\label{normExplicit}
& \gamma^{0}_{k,0} =1, \qquad \qquad\quad  \gamma^{-n}_{k,n}  = \frac{1}{2^n} \sum_{\nu=0}^{\min(k,n)} \binom{n}{\nu} ,\qquad      k,n \in \mathbb{N}_0.
 \end{align}

\section{Fully diagonalized spectral methods}\setcounter{equation}{0} \setcounter{lmm}{0} \setcounter{thm}{0}
In this section, we propose the fully diagonalized spectral methods
using generalized Laguerre functions for solving differential equations
on the half line. The main idea is to find a system of Sobolev orthogonal
functions \cite{FerMarPinXu,MarXu} with respect to the coercive bilinear form arising from differential equation,
such that both the exact solution and the approximate solution can be explicitly expressed
as a Fourier series in the Sobolev orthogonal
functions. Although we only consider in this section
non-homogenous  Robin/Drichlet boundary  value problems of a second order equation,
one can extend the  fully diagonalized spectral methods for solving partial  differential equations
of an arbitrary high order.

\subsection{Robin boundary value problems}\setcounter{equation}{0} \setcounter{lmm}{0}
Consider the second order elliptic boundary value problem:
\bgeq{5.1} \left\{\begin{array}{ll}-u^{\prime\prime}(x)+\gamma
u(x)=f(x),\quad \gamma\ge 0,\quad x\in\Lambda,\nns
-u^{\prime}(0)+\mu u(0)=\eta,\quad\displaystyle\lim_{{x\rightarrow+\infty}}u(x)=0,\quad \mu\geq 0.\end{array}\right.
\edeq
A weak formulation of \refe{5.1} is to find $u\in
H^{1}(\Lambda)$ such that \bgeq{5.2}
A_{\gamma,\mu}(u,v):=\mu u(0)v(0)+(u^\prime,v^\prime)+\gamma
(u,v)=(f,v)+\eta\,v(0),\quad \forall v\in H^{1}(\Lambda). \edeq
The Lax-Milgram lemma guarantees a unique solution to \eqref{5.2}
if $f\in (H^{1}(\Lambda))^{\prime}$.

Let
 $$   X_N^{\bt}:=\{e^{-\frac12 \bt x} p(x): p\in \mathbb{P}_N \}
 =   \{ l^{-1,\beta}_k: 0\le k\le N  \}.$$
The generalized Laguerre spectral scheme for \refe{5.1} is to find $u_{N}\in
X^{\bt}_{N}$, such that
\bgeq{5.3}
A_{\gamma,\mu}(u_N,v_N)=(f,v_{N})+\eta v_N(0),\quad
\forall v_{N}\in X^{\bt}_{N}.
\edeq

For an efficient approximation scheme,  one usually chooses the generalized Laguerre functions
$\{l_{k}^{-1,\beta}(x)\}_{0\leq k\leq N}$ as the basis functions for problem \refe{5.3}.
However, we are eager for  an ideal approximation scheme whose (total) stiff matrix,  in analogue to the Fourier spectral method for periodic problem, is diagonal. Obviously,  the utilization of the basis functions $\{l_{k}^{-1,\beta}(x)\}_{0\leq k\leq N}$
leads to a tridiagonal algebraic system. 
To this end,  we shall construct  new basis functions $\{\mathcal{R}_{k}^{\bt}(x)\}_{0\leq k\leq N}$
which are mutually orthogonal with respect to the Sobolev  inner product $A_{\gamma,\mu}(\cdot,\cdot)$ instead of $a_1^{-1,\beta}(\cdot,\cdot)$ defined in \reft{lm:2.5s}.

\bglm{lm:5.1}  Let  $\mathcal{R}_{k}^{\bt}\in X^{\bt}_k,\, k\in \mathbb{N}_0$ be the Sobolev orthogonal Laguerre functions such that
 $\mathcal{R}_{k}^{\bt}-l^{-1,\beta}_{k}\in X^{\bt}_{k-1}$  and
\begin{equation}
\label{5.4}
A_{\gamma,\mu}(\mathcal{R}_{k}^{\bt},\mathcal{R}_{m}^{\bt})=\rho_{k}\delta_{k,m}, \qquad k,m\in \mathbb{N}_0.
\end{equation}
Then $ \mathcal{R}_{k}^{\bt}(x), \, k\in \mathbb{N}_0$  satisfy the following recurrence relation,
\begin{equation}\label{5.5}\mathcal{R}_{0}^{\bt}(x)=l_0^{-1,\beta}(x),\quad \mathcal{R}_{k}^{\bt}(x)=l_k^{-1,\beta}(x)-d_{k-1}\mathcal{R}_{k-1}^{\bt}(x), \quad \forall\ k\ge 1,\end{equation}
where $\rho_{0}=\mu+ \dfrac{\beta}{4}+\dfrac{\gamma}{\beta}$ and
 \begin{align*}
& d_{k-1}=\frac{\beta}{4\rho_{k-1}}-\frac{\gamma}{\beta \rho_{k-1}}, \quad
 \rho_{k}= -d^2_{k-1} \rho_{k-1} +  \frac{\beta}{2}+\frac{2\gamma}{\beta}, \qquad k \ge 1.
\end{align*}
\edlm
\begin{proof}  By the orthogonality assumption \eqref{5.4} of $\{\mathcal{R}_{k}^{\bt}\}$,
\begin{align*}
l_k^{-1,\beta}(x)=\mathcal{R}_{k}^{\bt}(x) + \dsum_{m=0}^{k-1}\frac{A_{\gamma,\mu}(l_k^{-1,\beta},  \mathcal{R}_{m}^{\bt})}{\rho_{m}} \mathcal{R}_{m}^{\bt}(x).
\end{align*}
Meanwhile, by \refe{5.2} and \refe{innProd}, for any $k>m\ge 0$,
\begin{align*}
A_{\gamma,\mu}(l_k^{-1,\beta},  \mathcal{R}_{m}^{\bt})
= a^{-1,\beta}_1(l_k^{-1,\beta},  \mathcal{R}_{m}^{\bt}) + \mu l_k^{-1,\beta}(0) \mathcal{R}_{m}^{\bt}(0)+ \big(\gamma-\frac{\beta^2}{4}\big) (l_k^{-1,\beta},  \mathcal{R}_{m}^{\bt}) .
\end{align*}
Both the first  and  the second terms in the righthand side above  are zero due to the orthogonality relation \eqref{2.27s}
 of  $\{l_k^{-1,\beta}\}$
and the homogeneity boundary condition \eqref{homoBnd} for  $l_k^{-1,\beta},\, k\ge 1$. Further by \eqref{2.22s} and the orthogonality
relation \eqref{2.26s} for  $\{l^{0,\beta}_k\}$,
\begin{align*}
A_{\gamma,\mu}(l_k^{-1,\beta},  \mathcal{R}_{m}^{\bt}) =&  \big(\gamma-\frac{\beta^2}{4}\big) (l_k^{-1,\beta},  \mathcal{R}_{m}^{\bt})=  \big(\gamma-\frac{\beta^2}{4}\big) (l_k^{-1,\beta},  l_{k-1}^{-1,\beta})\delta_{m,k-1}
\\= &\, \big(\gamma-\frac{\beta^2}{4}\big) (l_k^{0,\beta}-l_{k-1}^{0,\beta}, l_{k-1}^{0,\beta}-l_{k-2}^{0,\beta})\delta_{m,k-1}
\\=&\,
   \big(\gamma-\frac{\beta^2}{4}\big)   (-l_{k-1}^{0,\beta},  l_{k-1}^{0,\beta})\delta_{m,k-1}
    =\,\big(\frac{\beta}{4}-\frac{\gamma}{\beta}\big)   \delta_{m,k-1},  \quad k>m\ge 0,
\end{align*}
which, in return,  implies
\begin{align*}
l_k^{-1,\beta}(x)= \mathcal{R}_{k}^{\bt}(x) +  d_{k-1} \mathcal{R}_{k-1}^{\bt}(x),\quad
d_{k-1}  =   \frac{\beta}{4\rho_{k-1}}-\frac{\gamma}{\beta \rho_{k-1}},
\qquad k\ge 1.
\end{align*}

We now turn to the proof  of  the recurrence identity for  $\rho_k,k\ge 0$.
Firstly, a direct computation shows
\begin{align*}
  \rho_{0} = &\, A_{\gamma,\mu}(\mathcal{R}_0^{\beta}, \mathcal{R}_0^{\beta}) =  A_{\gamma,\mu}(l_0^{-1,\beta}, l_0^{-1,\beta})=
 \mu+ \frac{\beta}{4}+\frac{\gamma}{\beta}.
\end{align*}
Further, for $k\ge 1$,
\begin{align*}
  \beta = &\, a^{-1,\beta}_1(l_k^{-1,\beta}, l_k^{-1,\beta}) = A_{\gamma,\mu}(l_k^{-1,\beta}, l_k^{-1,\beta})
  + \big(\frac{\beta^2}{4}-\gamma\big) (l_k^{-1,\beta}, l_k^{-1,\beta})
  \\
  =&\, A_{\gamma,\mu}(\mathcal{R}_{k}^{\bt}(x) +  d_{k-1} \mathcal{R}_{k-1}^{\bt}(x), \mathcal{R}_{k}^{\bt}(x) +  d_{k-1} \mathcal{R}_{k-1}^{\bt}(x))
  + \big(\frac{\beta^2}{4}-\gamma\big) (l_k^{0,\beta}-l_{k-1}^{0,\beta},l_k^{0,\beta}-l_{k-1}^{0,\beta})
  \\
  =&\, \rho_k+d^2_{k-1} \rho_{k-1} +  \big(\frac{\beta^2}{4}-\gamma\big)  \frac{2}{\beta},
\end{align*}
where we have used  \eqref{2.27s} and  \eqref{2.28s} for the first equality sign,
\eqref{5.5} and \eqref{2.22s} for the third equality sign,
and \eqref{5.4} and  \eqref{2.26s} for the fourth equality sign.
The proof is completed.
\end{proof}

Obviously, 
$X^{\beta}_N=\{ \mathcal{R}^{\beta}_k :   0\le k\le N \}$.
 Thus the variational forms \eqref{5.2} and \eqref{5.3} together with
the orthogonality of $\{\mathcal{R}_{k}^{\bt}\}$
lead to the following main theorem in this subsection.

\begin{thm} Let $u$ and $u_N$ be the solution  of  \eqref{5.1}  and  \eqref{5.3}, respectively. Then
both $u$ and $u_N$ have the explicit representations in $\{\mathcal{R}^{\beta}_k\}$,
\begin{align*}
    & u(x) = \sum_{k=0}^{\infty} \hat u_k \mathcal{R}_k^{\beta}(x),  \qquad
    u_N(x) = \sum_{k=0}^{N} \hat u_k \mathcal{R}_k^{\beta}(x),
    \\
    &\hat u_k
    = \frac{1}{\rho_k} A_{\gamma,\mu}(u, \mathcal{R}_k^{\beta})
    = \frac{1}{\rho_k}  \left[ (f, \mathcal{R}_k^{\beta}) + \eta \mathcal{R}_k^{\beta}(0)\right], \quad k\ge 0.
\end{align*}
\end{thm}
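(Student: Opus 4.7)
The statement has two parallel parts, one finite-dimensional (for $u_N$) and one infinite-dimensional (for $u$). I would dispatch the discrete case first because it is essentially algebra, then handle the continuous case by a density/completeness argument.

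For $u_N$, my plan is to use that $\{\mathcal{R}^{\beta}_k:0\le k\le N\}$ is a basis of $X_N^{\beta}$ — this is immediate from \refl{lm:5.1} because the transformation between $\{l_k^{-1,\beta}\}_{k=0}^N$ and $\{\mathcal{R}_k^{\beta}\}_{k=0}^N$ is unit lower triangular (each $\mathcal{R}_k^{\beta}$ has leading term $l_k^{-1,\beta}$ modulo $X_{k-1}^{\beta}$). Writing $u_N=\sum_{k=0}^N c_k \mathcal{R}_k^{\beta}$ and taking $v_N=\mathcal{R}_m^{\beta}$ in \eqref{5.3}, the orthogonality relation \eqref{5.4} collapses the left-hand side to $c_m\rho_m$, while the right-hand side reads $(f,\mathcal{R}_m^{\beta})+\eta\mathcal{R}_m^{\beta}(0)$. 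This identifies $c_m=\hat u_m$.

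For $u$, the plan has two ingredients. First, I would show that $\{\mathcal{R}_k^{\beta}\}_{k\ge 0}$ is complete in the Hilbert space $(H^1(\Lambda),A_{\gamma,\mu}(\cdot,\cdot))$: since $A_{\gamma,\mu}$ is coercive and continuous on $H^1(\Lambda)$ (by the Lax–Milgram hypothesis noted after \eqref{5.2}), the induced norm is equivalent to the standard $H^1$-norm when $\mu,\gamma>0$ (and still a complete inner product in the degenerate cases after mild adjustment). Completeness of the generalized Laguerre functions $\{l_k^{-1,\beta}\}_{k\ge 0}$ in this space — which can be traced to the completeness of classical Laguerre functions in $L^2$ together with \eqref{2.22s}–\eqref{2.23s} to bootstrap into $H^1$ — then transfers to $\{\mathcal{R}_k^{\beta}\}$ through the triangular change of basis in \eqref{5.5}. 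Second, with completeness in hand, the orthogonal expansion $u=\sum_{k\ge 0}\hat u_k\mathcal{R}_k^{\beta}$ converges in the $A_{\gamma,\mu}$-norm with $\hat u_k=\rho_k^{-1}A_{\gamma,\mu}(u,\mathcal{R}_k^{\beta})$, and plugging $v=\mathcal{R}_k^{\beta}$ into \eqref{5.2} rewrites this as $\hat u_k=\rho_k^{-1}[(f,\mathcal{R}_k^{\beta})+\eta\mathcal{R}_k^{\beta}(0)]$, matching the discrete formula.

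The main obstacle is the completeness claim, which is the only non-algebraic ingredient. One must be a little careful about the edge cases $\mu=0$ and $\gamma=0$ (where $A_{\gamma,\mu}$ only controls a semi-norm) and about the decay condition $\lim_{x\to\infty}u(x)=0$ built into \eqref{5.1}; my plan would be to verify that each $\mathcal{R}_k^{\beta}$ automatically satisfies this decay (inherited from the exponential factor in $l_k^{-1,\beta}$) and that truncation of the Fourier series yields $X_N^{\beta}\subset H^1(\Lambda)$ approximants, so the standard density of $\{l_k^{-1,\beta}\}$ in the energy space is enough to close the argument. Once completeness is established, the remainder of the proof is a two-line application of orthogonality and the weak formulation, and it is genuinely the same calculation in the continuous and discrete cases.
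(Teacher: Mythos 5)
Your proposal is correct and follows essentially the same route as the paper, which offers no formal proof at all: it simply remarks that $X_N^{\beta}=\operatorname{span}\{\mathcal{R}_k^{\beta}: 0\le k\le N\}$ and that the variational forms \eqref{5.2}, \eqref{5.3} together with the orthogonality \eqref{5.4} yield the theorem — precisely your test-function-plus-orthogonality calculation. The one ingredient you treat more carefully than the paper is the completeness of $\{\mathcal{R}_k^{\beta}\}_{k\ge 0}$ in the energy space for the infinite expansion of $u$, which the paper takes for granted (it asserts completeness of $\{l_k^{\alpha,\beta}\}$ in $H^r_{w^{r+\alpha}}(\Lambda)$ only in the introduction, citing \eqref{2.27s}); your attention to the degenerate cases $\gamma=0$ or $\mu=0$, where $A_{\gamma,\mu}$ may fail to be coercive on $H^1(\Lambda)$, is a legitimate point the paper glosses over.
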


\subsection{Dirichlet boundary value problems}

Consider the second order elliptic boundary value problem:
\bgeq{4.1s} \left\{\begin{array}{ll}-u^{\prime\prime}(x)+\gamma
u(x)=f(x),\quad &\gamma>0,\quad x\in\Lambda,\nns
u(0)=\eta,\quad\displaystyle\lim_{{x\rightarrow+\infty}}u(x)=0.\end{array}\right.
\edeq
A weak formulation of \refe{4.1s} is to find $u\in
H^{1}(\Lambda)$ such that $u(0)=\eta$ and
\bgeq{4.2s}
A_{\gamma}(u,v):= (u^\prime,v^\prime)+\gamma
(u,v)=(f,v),\quad \forall v\in H^{1}_{0}(\Lambda). \edeq
Clearly, if
$f\in(H^1_0(\Lambda))^{\prime}$, then by Lax-Milgram lemma,
\refe{4.2s} admits a unique solution.

Let
 $$   X_N^{0,\bt}:=\{e^{-\frac12 \bt x} p(x): p(0)=0 \text{ and }  p\in \mathbb{P}_N \}
 = \{ l^{-1,\beta}_k: 1\le k\le N  \}.$$
The generalized Laguerre spectral scheme for \refe{4.1s} is to find $u_{N}\in
X^{\bt}_{N}$, such that $u_N(0)=\eta$ and
\begin{equation}\label{4.3s}
A_{\gamma}(u_{N},v_{N}) =(f,v_{N}),\quad
\forall v_{N}\in X^{0,\bt}_{N}.
\end{equation}

To propose a fully diagonal approximation scheme for \eqref{4.2s} , we need to construct   new basis functions $\{\mathcal{S}_{k}^{\bt}\}_{1\leq k\leq N}$
which are mutually orthogonal with respect to the Sobolev  inner product $A_{\gamma}(\cdot,\cdot)$.

\bglm{lm:4.1s} Let $\mathcal{S}_{k}^{\bt}\in X^{0,\bt}_k,\, k\ge 1$  be the Sobolev orthogonal Laguerre functions such that $\mathcal{S}_{k}^{\bt}-l^{-1,\beta}_k\in X^{0,\bt}_{k-1}$ and
\begin{equation}\label{4.15s}
A_{\gamma}(\mathcal{S}_{k}^{\bt},\mathcal{S}_{m}^{\bt})=   \varrho_{k}\delta_{k,m}, \quad k,m\ge 1.
\end{equation} Then we have
\begin{equation}\label{4.16s}\mathcal{S}_{1}^{\bt}(x)=l_1^{-1,\beta}(x),\quad \mathcal{S}_{k}^{\bt}(x)=l_k^{-1,\beta}(x)-d_{k-1}\mathcal{S}_{k-1}^{\bt}(x), \quad k \ge 2, \end{equation}
where $\varrho_{1}=\dfrac{4\gamma+\beta^2}{2\beta}$ and
 \begin{align*}
& d_{k-1}=\frac{\beta}{4\varrho_{k-1}}-\frac{\gamma}{\beta \varrho_{k-1}}, \quad
 \varrho_{k}= -d^2_{k-1} \varrho_{k-1} +  \frac{\beta}{2}+\frac{2\gamma}{\beta}, \qquad k \ge 2.
\end{align*}
\edlm
\begin{proof}
The proof is in the same way as Lemma \ref{lm:5.1}. We neglect the details.
\end{proof}

To deal with the non-homogenous boundary condition,  we need to
supplement $\mathcal{S}_0^{\beta}(x)$   which is  orthogonal to all functions in  $H^{1}_0(\Lambda)$ with respect to $A_{\gamma}(\cdot,\cdot)$.
Suppose
\begin{align*}
\mathcal{S}_0^{\beta}(x) = \sum_{m=0}^{\infty}\hat s_m l_m^{-1,\beta}(x) \quad \text{ with }  \hat s_0 = 1,
\end{align*}
such that $\mathcal{S}_0^{\beta}(0)=1$.
Then by \eqref{2.22s}, \eqref{2.27s} and \eqref{2.26s},
\begin{align*}
0 =&\, A_{\gamma}(\mathcal{S}_0^{\beta},l_k^{-1,\beta})  = a_{1}^{\alpha,\beta}(\mathcal{S}_0^{\beta},l_k^{-1,\beta}) + \big(\gamma-\frac{\beta^2}{4}\big) (\mathcal{S}_0^{\beta},l_k^{-1,\beta})
\\
=&\,  \dsum_{m=0}^{\infty}  \hat s_m a_{1}^{\alpha,\beta}(l_m^{-1,\beta}, l_k^{-1,\beta})
 + \big(\gamma-\frac{\beta^2}{4}\big)  \dsum_{m=0}^{\infty}  \hat s_m  \big(   l_m^{0,\beta}-l_{m-1}^{0,\beta},  l_k^{0,\beta}-l_{k-1}^{0,\beta} \big)
\\
=&\, \beta  \hat s_k +  \big(\gamma-\frac{\beta^2}{4}\big)  \frac{2\hat s_k - \hat s_{k-1}-\hat s_{k+1}}{\beta}
\\
=&\, \big(\gamma+\frac{\beta^2}{4}\big) \frac{2\hat s_k}{\beta} - \big(\gamma-\frac{\beta^2}{4}\big)  \frac{\hat s_{k-1}+\hat s_{k+1}}{\beta}
,\quad  k\ge 1.
\end{align*}

The  characteristic  equation for the above three term recurrence relation reads
\begin{align*}
(\beta^2-4\gamma\big) z^2 + 2\big(4\gamma+\beta^2 \big) z + (\beta^2-4\gamma\big)=0,
\end{align*}
which admits  two distinct real roots  $z_{\pm}=\displaystyle  \frac{2\sqrt{\gamma}\mp\beta }{ 2\sqrt{\gamma}\pm \beta}
$
if and only if $\gamma> 0$.  In this case,
all  $\hat s_m$, $m\ge 0$ can be expressed as
\begin{align}
\label{coefSeq}
   \hat s_m = c_{+}  \frac{(2\sqrt{\gamma} - \beta )^m}{ (2\sqrt{\gamma}+\beta )^m} + c_{-}  \frac{(2\sqrt{\gamma}+\beta )^m}{( 2\sqrt{\gamma}-\beta  )^m},
\end{align}
with the coefficients $c_{\pm}$ to be determined by $\hat s_0=1$ and $\lim_{m\to \infty} \hat s_m=0$.
As a result,
\begin{align*}
   &c_{+}=1, \quad c_{-}=0,
   \quad \text{ and }\quad
    \mathcal{S}_0^{\beta}(x) = \sum_{m=0}^{\infty} \dfrac{(2\sqrt{\gamma} - \beta )^m}{ (2\sqrt{\gamma}+\beta )^m} l_m^{-1,\beta}(x).
   \end{align*}

Also, we  need a function $\mathcal{S}^{\beta}_{0,N} \in X^{\beta}_N$  which satisfies $\mathcal{S}^{\beta}_{0,N}(0)=1$ and  is  orthogonal to all functions in  $X^{0,\beta}_N$ with respect to $A_{\gamma}(\cdot,\cdot)$. Let us write
\begin{align*}
\mathcal{S}_{0,N}^{\beta}(x) = \sum_{m=0}^{N}\hat s_m l_k^{-1,\beta}(x).
\end{align*}
Then $\{\hat s_m\}_{0\le m\le N}$ is determined by \eqref{coefSeq}  for $1\le m\le N$ together with the endpoint values $\hat s_0=1$ and  $\hat s_{N+1}=0$. Solving the system for  $c_{\pm }$£¬we finally have
\begin{align*}
&c_{+} =  \frac{(2\sqrt{\gamma}+\beta )^{2N+2} }{ (2\sqrt{\gamma}+\beta )^{2N+2} - (2\sqrt{\gamma}-\beta )^{2N+2} }  ,
\qquad
c_{-} =  -\frac{(2\sqrt{\gamma}-\beta )^{2N+2} }{ (2\sqrt{\gamma}+\beta )^{2N+2} - (2\sqrt{\gamma}-\beta )^{2N+2} }  ,
\\
&\mathcal{S}_{0,N}^{\beta}(x) = \sum_{m=0}^{N} \frac{(2\sqrt{\gamma}+\beta )^{2N+2-m} (2\sqrt{\gamma}-\beta )^{m}
- (2\sqrt{\gamma}-\beta )^{2N+2-m} (2\sqrt{\gamma}+\beta )^{m}   }{ (2\sqrt{\gamma}+\beta )^{2N+2} - (2\sqrt{\gamma}-\beta )^{2N+2} }  l_k^{-1,\beta}(x).
\end{align*}

\begin{thm} Let $u$ and $u_N$ be the solution  to  \eqref{4.1s}  and  \eqref{4.3s}, respectively. Then
both $u$ and $u_N$ have the explicit representations in $\{\mathcal{S}^{\beta}_k\}$,
\begin{align*}
    & u(x) = \eta \mathcal{S}_0^{\beta}(x) + \sum_{k=1}^{\infty} \hat u_k \mathcal{S}_k^{\beta}(x),  \qquad
    u_N(x) =\eta \mathcal{S}_{0,N}^{\beta}(x) +  \sum_{k=1}^{N} \hat u_k \mathcal{S}_k^{\beta}(x),
    \\
    &\hat u_k
    = \frac{1}{\varrho_k} A_{\gamma}(u, \mathcal{S}_k^{\beta})
    = \frac{(f, \mathcal{S}^{\beta}_m)}{\varrho_m}, \quad k\ge 1.
\end{align*}
\end{thm}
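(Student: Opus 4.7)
The plan is to decompose both $u$ and $u_N$ into a boundary-lifting piece plus an $A_{\gamma}$-orthogonal expansion in $\{\mathcal{S}_k^{\beta}\}_{k\ge 1}$ from Lemma \ref{lm:4.1s}. Because the constructions in the preceding paragraphs ensure $\mathcal{S}_0^{\beta}(0)=\mathcal{S}_{0,N}^{\beta}(0)=1$, the shifted functions $u-\eta\mathcal{S}_0^{\beta}$ and $u_N-\eta\mathcal{S}_{0,N}^{\beta}$ both vanish at the origin. Hence $u-\eta\mathcal{S}_0^{\beta}\in H^1_0(\Lambda)$ while $u_N-\eta\mathcal{S}_{0,N}^{\beta}\in X^{0,\beta}_N=\mathrm{span}\{l_k^{-1,\beta}:1\le k\le N\}=\mathrm{span}\{\mathcal{S}_k^{\beta}:1\le k\le N\}$.

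For the continuous problem, I would first expand $u-\eta\mathcal{S}_0^{\beta}=\sum_{k\ge 1}\hat u_k\mathcal{S}_k^{\beta}$ in the Sobolev-orthogonal basis, so that by \eqref{4.15s}
\[
   \hat u_k=\frac{A_{\gamma}(u-\eta\mathcal{S}_0^{\beta},\mathcal{S}_k^{\beta})}{\varrho_k}.
\]
By the defining property of $\mathcal{S}_0^{\beta}$ (it is $A_{\gamma}$-orthogonal to every element of $H^1_0(\Lambda)$, and in particular to each $\mathcal{S}_k^{\beta}$ with $k\ge 1$), the term containing $\eta$ drops out, leaving $\hat u_k=\varrho_k^{-1}A_{\gamma}(u,\mathcal{S}_k^{\beta})$. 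Testing the weak formulation \eqref{4.2s} against $\mathcal{S}_k^{\beta}\in H^1_0(\Lambda)$ then converts this into the closed form $\hat u_k=\varrho_k^{-1}(f,\mathcal{S}_k^{\beta})$, which is precisely the claim.

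For the discrete solution, the argument is the finite-dimensional analogue. Writing $u_N=\eta\mathcal{S}_{0,N}^{\beta}+\sum_{k=1}^N\tilde u_k\mathcal{S}_k^{\beta}$—a valid decomposition because the map $v\mapsto v(0)$ splits $X^{\beta}_N$ into the direct sum $X^{0,\beta}_N\oplus\mathrm{span}\{\mathcal{S}_{0,N}^{\beta}\}$—and testing \eqref{4.3s} against $v_N=\mathcal{S}_k^{\beta}$ for $1\le k\le N$, the $A_{\gamma}$-orthogonality of $\mathcal{S}_{0,N}^{\beta}$ to $X^{0,\beta}_N$ (by construction) together with the orthogonality relation \eqref{4.15s} diagonalizes the resulting linear system and yields $\tilde u_k=\varrho_k^{-1}(f,\mathcal{S}_k^{\beta})=\hat u_k$, identifying the discrete coefficients with the continuous ones.

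The main obstacle lies in justifying the first step for the continuous problem: one must establish $H^1$-convergence of the series $\sum_{k\ge 1}\hat u_k\mathcal{S}_k^{\beta}$ toward $u-\eta\mathcal{S}_0^{\beta}$. This requires density of $\mathrm{span}\{l_k^{-1,\beta}:k\ge 1\}$ in $H^1_0(\Lambda)$—which follows from the completeness of the generalized Laguerre functions established in Section 2—combined with the fact that, for $\gamma>0$, the bilinear form $A_{\gamma}(\cdot,\cdot)$ induces a norm equivalent to $\|\cdot\|_{1}$ on $H^1_0(\Lambda)$, so that the formal $A_{\gamma}$-Parseval identity yields genuine $H^1$-convergence. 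The decay of the coefficients $\hat s_m=(2\sqrt{\gamma}-\beta)^m/(2\sqrt{\gamma}+\beta)^m$ (cf.\ \eqref{coefSeq}) should likewise be invoked to guarantee that $\mathcal{S}_0^{\beta}$ itself lies in $H^1(\Lambda)$ and thus that the lifting step is well-posed.
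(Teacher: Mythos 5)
Your proposal is correct and follows exactly the route the paper intends: the paper states this theorem without a written proof, presenting it as an immediate consequence of the orthogonality \eqref{4.15s}, the $A_{\gamma}$-orthogonality of $\mathcal{S}_0^{\beta}$ and $\mathcal{S}_{0,N}^{\beta}$ to $H^1_0(\Lambda)$ and $X^{0,\beta}_N$ respectively, and testing \eqref{4.2s}--\eqref{4.3s} against $\mathcal{S}_k^{\beta}\in H^1_0(\Lambda)$. Your added remarks on the $H^1$-convergence of the series and on $\mathcal{S}_0^{\beta}\in H^1(\Lambda)$ (via the geometric decay of $\hat s_m$ for $\gamma>0$) address details the paper leaves implicit, and you also silently correct the index typo $m\mapsto k$ in the stated formula for $\hat u_k$.
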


\section{Convergence analysis}\setcounter{equation}{0} \setcounter{lmm}{0} \setcounter{thm}{0}
In this section, we shall derive  the optimal  error estimate for the spectral methods using generalized Laguerre functions.
To this end, we first conduct some numerical analysis on the orthogonal projections.

\subsection{Orthogonal projections}

Let  $r,N\in \mathbb{N}_0$, $\alpha>-r-1$ and $\beta>0$.
Define the orthogonal projection $\pi^{\alpha,\beta}_{r,N}: H^{r}_{w^{\alpha+r}}(\Lambda)\mapsto X^{\beta}_N$ such that
\begin{align}
\label{projection}
a^{\alpha,\beta}_{r}(u-\pi^{\alpha,\beta}_{r,N}u,v)= 0, \quad v\in X^{\beta}_N.
\end{align}
In view of the orthogonality relation \eqref{2.27s}, $\pi^{\alpha,\beta}_{r,N} u$ is a truncated Fourier series
of $u$
in $\{l^{\alpha,\beta}_k\}$,
\begin{align}
\label{truncated}
 \pi^{\alpha,\beta}_{r,N} u(x) = \sum_{k=0}^{N} \hat u_k l^{\alpha,\beta}_k(x),
\quad u(x) = \sum_{k=0}^{\infty} \hat u_k l^{\alpha,\beta}_k(x), \qquad
\hat u_k = \frac{a^{\alpha,\beta}_r(u, l^{\alpha,\beta}_k)}{a^{\alpha,\beta}_r(l^{\alpha,\beta}_k, l^{\alpha,\beta}_k)}.
\end{align}
In return, one obtains that,  for any nonnegative integers $N$ and $s$ with $N,s>-\alpha-1$,
\begin{align*}
a^{\alpha,\beta}_s(u-\pi^{\alpha,\beta}_{r,N}u,v) =0,\quad v\in X^{\beta}_N,
\end{align*}
which states that $\pi^{\alpha,\beta}_{r,N} = \pi^{\alpha,\beta}_{s,N} $ for all
 admissible $s,r$ and $N$. For this reason, we shall  omit the subscript $r$ and simplify write
$\pi^{\alpha,\beta}_{N}=\pi^{\alpha,\beta}_{r,N}$.

Besides,
 \eqref{2.18s} clearly states $\partial_x^{\ell} l^{-n,\beta}_k(0)=0$,  $\ell=0,1,\dots,n-1$ for any $k\ge n \ge 1$.
 Thus for $N\ge n$, $\pi_N^{-n,\beta}u$ preserves the endpoint values of  $u$ up to the $(n-1)$-th order derivative, i.e.,
\begin{align}
\label{endpointVal}
 \partial_x^{\ell}\pi_N^{-n,\beta}u(0) =  \partial_x^{\ell}  u(0), \qquad \ell=0,1,\dots,n-1.
 \end{align}
   In other words,  $u-\pi_N^{-n,\beta}u\in H^n_0(\Lambda)$  for any  $u\in H^n(\Lambda)$ if $N\ge n$.

To measure the error between $u$ and $\pi_N^{\alpha,\beta}u$, we introduce the equivalent norm in $H^r_{w^{\alpha+r}}(\Lambda)$
 for $r\in \mathbb{N}_0$ and $\alpha\in \mathbb{R}$,
\begin{align*}
\|u\|_{r,\alpha,\beta} = \big[a^{\alpha,\beta}_r(u,u)\big]^{1/2} =\Big[ \sum_{\nu=0}^{r} \binom{r}{\nu} \frac{\beta^{2r-2\nu}}{2^{2r-2\nu}}
\big(\partial_x^{\nu} u,  \partial_x^{\nu} u\big)_{w^{\alpha+r}} \Big]^{1/2}.
\end{align*}

\begin{theorem}\label{th:3.1}
Let  $r\in \mathbb{N}_0$, $\alpha>-r-1$ and $\beta>0$.  Then  for any function $u\in H^r_{w^{\alpha+r}}(\Lambda)$  and any nonnegative integers $N,s>-\alpha-1$,
\begin{align}
\label{errorProj}
\|u-\pi_{N}^{\alpha,\beta}u\|_{s,\alpha,\beta}
  \lesssim  (\beta N)^{\frac{s-r}{2}}  \|u-\pi_{N}^{\alpha,\beta}u\|_{r,\alpha,\beta}
 \lesssim  (\beta N)^{\frac{s-r}{2}}\|u\|_{r,\alpha,\beta},
 \quad r\ge s,
 \end{align}
where  the implicit constants $c=c(\alpha,r)$ is independent of $\beta$, $N$,  $s$ and $u$.
\end{theorem}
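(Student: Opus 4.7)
The plan is to diagonalize both sides of \eqref{errorProj} in the Sobolev-orthogonal Laguerre basis and reduce the estimate to a uniform pointwise comparison of the normalization constants $\gamma^{\alpha}_{k,s}$ and $\gamma^{\alpha}_{k,r}$. Under the theorem's hypotheses $\alpha+s>-1$ and $\alpha+r>-1$, one has $\chi_s(\alpha)=\chi_r(\alpha)=0$, so \eqref{2.27s} makes $\{l^{\alpha,\beta}_k\}_{k\ge 0}$ mutually orthogonal with respect to both $a^{\alpha,\beta}_s(\cdot,\cdot)$ and $a^{\alpha,\beta}_r(\cdot,\cdot)$, with all $\gamma^{\alpha}_{k,s},\gamma^{\alpha}_{k,r}$ strictly positive. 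Writing $u=\sum_{k\ge 0}\hat u_k l^{\alpha,\beta}_k$ as in \eqref{truncated}, the error becomes the truncated tail $u-\pi^{\alpha,\beta}_N u=\sum_{k>N}\hat u_k l^{\alpha,\beta}_k$, and Parseval's identity yields
\begin{align*}
\|u-\pi^{\alpha,\beta}_N u\|_{s,\alpha,\beta}^2=\sum_{k>N}|\hat u_k|^2\,\beta^{s-\alpha-1}\gamma^{\alpha}_{k,s},
\qquad
\|u-\pi^{\alpha,\beta}_N u\|_{r,\alpha,\beta}^2=\sum_{k>N}|\hat u_k|^2\,\beta^{r-\alpha-1}\gamma^{\alpha}_{k,r}.
\end{align*}
Thus for $s\le r$ the first inequality in \eqref{errorProj} is reduced, termwise, to a bound of the form $\gamma^{\alpha}_{k,s}\le C(\alpha,r)\,k^{s-r}\gamma^{\alpha}_{k,r}$ for all $k\ge 1$, after which the decay factor $(\beta N)^{s-r}$ in the squared norm comes from $\beta^{s-r}k^{s-r}\le(\beta N)^{s-r}$ whenever $k>N$.

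The main technical ingredient is therefore the uniform ratio bound on the normalization constants. I would prove it by induction on $r-s$, using only the recurrence \eqref{2.28s}. For the base case $r-s=1$, iterating \eqref{2.28s} once and using $\gamma^{\alpha+1}_{k,0}=\Gamma(k+\alpha+2)/k!$ gives the closed form $\gamma^{\alpha}_{k,1}=\tfrac{1}{2}(2k+\alpha+1)\gamma^{\alpha}_{k,0}$ when $\alpha+1>-1$, hence $\gamma^{\alpha}_{k,1}/\gamma^{\alpha}_{k,0}\asymp k+1$ uniformly on $k\ge 0$. The inductive step can be carried out either by iterating \eqref{2.28s} directly, or, more cleanly, via the factorization \eqref{2.29s}, $\gamma^{\alpha}_{k,n}=2^{-n}\lambda^{\alpha}_{k,n}\gamma^{\alpha}_k$, in which case \eqref{eigenInduct} identifies $\lambda^{\alpha}_{k,n}$ as a polynomial in $k$ of exact degree $n$ with leading coefficient $2^n$; under $\alpha>-r-1$ one then reads off two-sided bounds $\lambda^{\alpha}_{k,n}\asymp(k+1)^n$ for $0\le n\le r$, and hence $\gamma^{\alpha}_{k,r}/\gamma^{\alpha}_{k,s}\asymp(k+1)^{r-s}$.

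Once this ratio bound is in hand, substitution into the Parseval expansion yields
\begin{align*}
\|u-\pi^{\alpha,\beta}_N u\|_{s,\alpha,\beta}^2
\le C(\beta N)^{s-r}\sum_{k>N}|\hat u_k|^2\beta^{r-\alpha-1}\gamma^{\alpha}_{k,r}
= C(\beta N)^{s-r}\|u-\pi^{\alpha,\beta}_N u\|_{r,\alpha,\beta}^2,
\end{align*}
which is the first half of \eqref{errorProj}. The second half is immediate: since $\alpha+r>-1$, the form $a^{\alpha,\beta}_r(\cdot,\cdot)$ is a genuine inner product on $H^r_{w^{\alpha+r}}(\Lambda)$, and by \eqref{projection} the map $\pi^{\alpha,\beta}_N$ is its orthogonal projection onto $X^{\beta}_N$, so the Pythagorean identity gives $\|u-\pi^{\alpha,\beta}_N u\|_{r,\alpha,\beta}\le\|u\|_{r,\alpha,\beta}$.

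The hard part is really the \emph{uniform} polynomial control of $\gamma^{\alpha}_{k,n}$ across all $k\ge 0$: the asymptotic formulas \eqref{ratio} and \eqref{eigenEsti} only describe large $k$, so a careful induction on the recurrences \eqref{2.28s} or \eqref{eigenInduct}, together with the positivity built into the hypothesis $\alpha>-r-1$, is required to absorb the finitely many small-$k$ exceptions into a single constant $C(\alpha,r)$ that is genuinely independent of $\beta$, $N$, $s$, and $u$.
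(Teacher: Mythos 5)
Your proposal follows essentially the same route as the paper's proof: expand $u-\pi^{\alpha,\beta}_N u$ in the orthogonal system $\{l^{\alpha,\beta}_k\}$, apply the Parseval identity coming from \eqref{2.27s}, and reduce the first inequality to the bound $\gamma^{\alpha}_{k,s}/\gamma^{\alpha}_{k,r}\lesssim N^{s-r}$ for $k\ge N+1$, with the second inequality following from Bessel's inequality. The only difference is that you justify the ratio bound uniformly in $k$ by induction on the recurrence \eqref{2.28s}, whereas the paper simply invokes the large-$k$ asymptotic \eqref{ratio}; your treatment of the finitely many small-$k$ terms is, if anything, the more careful one.
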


\begin{proof} By \eqref{truncated} and  the orthogonality \eqref{2.27s},
\begin{align*}
\|u-\pi^{\alpha,\beta}_N&u\|_{s, \alpha,\beta}^2
=   \sum_{k=N+1 }^{\infty}  \beta^{s-\alpha-1} \gamma^{\alpha}_{k,s} \hat u_k^2
=   \beta^{s-r}   \sum_{k=N+1 }^{\infty} \left(\frac{\gamma^{\alpha}_{k,s}}{\gamma^{\alpha}_{k,r}}\right)  \beta^{r-\alpha-1} \gamma^{\alpha}_{k,r} \hat u_k^2.
\end{align*}
By \eqref{ratio}, one reveals that
\begin{align*}
\frac{\gamma^{\alpha}_{k,s}}{\gamma^{\alpha}_{k,r}}
\lesssim \frac{2^{r-s}}{(2k+\alpha+1)^{r-s}}   \lesssim \frac{1}{N^{r-s}}, \qquad k \ge N+1,\,   r\ge s.
\end{align*}
As a result,
\begin{align*}
\|u-&\pi^{\alpha,\beta}_Nu\|_{s, \alpha,\beta}^2
\lesssim   (\beta N)^{s-r}   \sum_{k=N+1 }^{\infty}  \beta^{r-\alpha-1} \gamma^{\alpha}_{k,r} \hat u_k^2
\\
& =  (\beta N)^{s-r}    \|u-\pi^{\alpha,\beta}_Nu\|_{r, \alpha,\beta}^2
\le  (\beta N)^{s-r} \|u\|_{r, \alpha,\beta}^2,
\end{align*}
which leads to \eqref{errorProj}.
\end{proof}
\subsection{Convergence analysis}
We first give the error estimate of the generalized Laguerre spectral method
\eqref{4.3s} for the non-homogenous  Dirichlet boundary value problem \eqref{4.1s}.

\begin{theorem}\label{th:4.1}
Let $u$ and $u_N$ be the solutions to \eqref{4.1s} and \eqref{4.3s}, respectively.
If $u\in  H^{r}_{w^{r-1}}(\Lambda)$ and integer
$r\geq 1$, then for sufficiently large $N$,
\begin{equation}\label{4.4s}\|(u-u_{N})^\prime\|^{2}+\gamma\|u-u_{N}\|^{2}\lesssim
 \Big( 1+\frac{4 \gamma}{\beta^2} \Big)\,  (\beta N)^{1-r} \|u\|^{2}_{r,-1,\beta} ,\end{equation}
and
\begin{equation}\label{4.5s} \|u-u_{N}\|_{w^{-1}}\lesssim
 \Big(1+\frac{\beta^2}{4\gamma} \Big)
\Big(1+\frac{2\sqrt{\gamma}}{\beta}\Big)^2 \,   (\beta N)^{-\frac{r}{2}} \|u\|_{r,-1,\beta}.
\end{equation}
\end{theorem}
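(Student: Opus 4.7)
\textbf{Proof plan for Theorem~\ref{th:4.1}.}
Since $u(0)=\eta=u_N(0)$, the error $e:=u-u_N$ lies in $H^1_0(\Lambda)$, and subtracting \eqref{4.3s} from \eqref{4.2s} gives Galerkin orthogonality $A_\gamma(e,v_N)=0$ for every $v_N\in X^{0,\beta}_N$. The natural comparison element is $\pi^{-1,\beta}_N u$: by the endpoint-preserving property \eqref{endpointVal} one has $\pi^{-1,\beta}_N u(0)=u(0)=\eta$ as soon as $N\ge 1$, so $\pi^{-1,\beta}_N u-u_N\in X^{0,\beta}_N$. A standard C\'ea-type manipulation then yields
\[
\|e\|^2_{A_\gamma}=A_\gamma(e,u-\pi^{-1,\beta}_N u)\le \|e\|_{A_\gamma}\,\|u-\pi^{-1,\beta}_N u\|_{A_\gamma},
\]
and the elementary comparison $\|v\|^2_{A_\gamma}=\|v'\|^2+\gamma\|v\|^2\le (1+4\gamma/\beta^2)\|v\|^2_{1,-1,\beta}$ combined with Theorem~\ref{th:3.1} applied with $\alpha=-1$ and $s=1$ immediately produces \eqref{4.4s}.

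For \eqref{4.5s}, I would run an Aubin--Nitsche duality argument with the weight $x^{-1}$ built in from the start. Set $g:=e/x$, so that $\|g\|_w=\|e\|_{w^{-1}}$ and $(g,e)=\|e\|^2_{w^{-1}}$. Let $\phi$ solve the dual problem $-\phi''+\gamma\phi=g$ with $\phi(0)=0$ and $\phi(\infty)=0$, i.e.\ $A_\gamma(\phi,v)=(g,v)$ for all $v\in H^1_0(\Lambda)$. Since $\phi(0)=0$, the endpoint-preservation property again ensures $\pi^{-1,\beta}_N\phi\in X^{0,\beta}_N$, so Galerkin orthogonality followed by Cauchy--Schwarz gives
\[
\|e\|^2_{w^{-1}}=A_\gamma(\phi-\pi^{-1,\beta}_N\phi,e)\le \|\phi-\pi^{-1,\beta}_N\phi\|_{A_\gamma}\,\|e\|_{A_\gamma},
\]
and Theorem~\ref{th:3.1} with $r=2$, $s=1$ yields $\|\phi-\pi^{-1,\beta}_N\phi\|_{A_\gamma}\lesssim (1+4\gamma/\beta^2)^{1/2}(\beta N)^{-1/2}\|\phi\|_{2,-1,\beta}$.

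The core technical step is the weighted regularity bound $\|\phi\|_{2,-1,\beta}\lesssim C(\beta,\gamma)\|g\|_w$. I would obtain it by testing the ODE against $x\phi$: integration by parts (the boundary contribution vanishes because $\phi(0)=0=\phi(\infty)$) gives $\|\phi'\|_w^2+\gamma\|\phi\|_w^2=(g,\phi)_w$, whence $\|\phi\|_w\le \|g\|_w/\gamma$ and $\|\phi'\|_w\le \|g\|_w/\sqrt\gamma$; then the ODE itself gives $\|\phi''\|_w\le \gamma\|\phi\|_w+\|g\|_w\le 2\|g\|_w$. Inserting these into $\|\phi\|^2_{2,-1,\beta}=(\beta^4/16)\|\phi\|_w^2+(\beta^2/2)\|\phi'\|_w^2+\|\phi''\|_w^2$ and plugging back into the Aubin--Nitsche chain, then dividing through by $\|e\|_{w^{-1}}$ and substituting the already-proven bound \eqref{4.4s} for $\|e\|_{A_\gamma}$, yields the desired negative-norm estimate \eqref{4.5s}.

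The main obstacle is the careful bookkeeping of the $\beta,\gamma$-dependence so that the prefactor collapses to the sharp form $(1+\beta^2/(4\gamma))(1+2\sqrt\gamma/\beta)^2$: it has to be assembled from two factors of $(1+4\gamma/\beta^2)^{1/2}$ (one from each energy bound) and the mixed $(\beta,\sqrt\gamma)$-terms arising from the three summands of $\|\phi\|^2_{2,-1,\beta}$. Everything else is essentially routine once the duality framework and the endpoint-preserving orthogonal projection $\pi^{-1,\beta}_N$ are in place.
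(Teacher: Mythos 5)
Your proposal is correct and follows essentially the same route as the paper: an energy (C\'ea-type) estimate against the comparison element $\pi^{-1,\beta}_N u$, whose admissibility rests on the endpoint-preservation property \eqref{endpointVal}, combined with Theorem~\ref{th:3.1}, and then an Aubin--Nitsche duality argument with the $x$-weighted dual problem $-x v''+\gamma x v=u-u_N$, $v(0)=0$. The only (harmless) differences are cosmetic: the paper tunes a free parameter $q$ via the projection orthogonality instead of your direct C\'ea bound, and it packages the dual regularity as the exact identity $\|u-u_N\|_{w^{-1}}^2=\|v\|^2_{2,-1,2\sqrt{\gamma}}$ followed by a norm-transfer inequality rather than your separate bounds on $\|\phi\|_w$, $\|\phi'\|_w$, $\|\phi''\|_w$; both yield the same $\beta,\gamma$-dependence.
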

\begin{proof} We first prove the inequality \refe{4.4s}.
By \eqref{4.2s}, we get
\begin{align*}
( (\pi_N^{-1,\beta} &u)^\prime,v_{N}^\prime)+\gamma (\pi_N^{-1,\beta}  u,v_{N})
=  (f,v_{N})
+ ( (\pi_N^{-1,\beta} u-u)^\prime,v_{N}^\prime)+\gamma (\pi_N^{-1,\beta}  u-u,v_{N}), \quad v_N\in X^{0,\beta}_N.
\end{align*}
Subtracting the above equation
from \refe{4.3s} yields
\begin{align*}
 ((\pi^{-1,\beta}_{N}&u-u_{N})^\prime,v_{N}^\prime)+\gamma(\pi^{-1,\beta}_{N} u-u_{N},v_{N})
=( (\pi_N^{-1,\beta} u-u)^\prime,v_{N}^\prime)+\gamma (\pi_N^{-1,\beta}  u-u,v_{N}).
\end{align*}
The above with \refe{projection} and the Cauchy-Schwartz
inequality gives that for any real number $q$,
\begin{align*}
&\,\big|  ((\pi^{-1,\beta}_{N}u-u_{N})^\prime,v_{N}^\prime)+\gamma(\pi^{-1,\beta}_{N} u-u_{N},v_{N})\big|
\\
=&\, \big|  (1+q)( (\pi_N^{-1,\beta} u-u)^\prime,v_{N}^\prime)+ \frac{4\gamma+\beta^2q}{4} (\pi_N^{-1,\beta}  u-u,v_{N})\big|
\\
\le &\, |1+q|\, \big\|(\pi_N^{-1,\beta} u-u)^\prime\big\|\, \big\|v_{N}^\prime\big\| + \frac{|4\gamma+\beta^2q|}{4}
\big\|\pi_N^{-1,\beta} u-u\big\|\, \big\|v_{N}\big\|
\\
\le&\,  \Big[ (1+q)^2 \big\|(\pi_N^{-1,\beta} u-u)^\prime\big\|^2 + \frac{(4\gamma+\beta^2q)^2}{16\gamma } \big\|\pi_N^{-1,\beta} u-u\big\|^2 \Big]^{1/2}\,
\big[ \|v_{N}^\prime\|^2+ \gamma \|v_{N} \|^2\big]^{1/2}.
\end{align*}
Taking $v_{N}=u_{N}-\pi_N^{-1,\beta}u\in X^{0,\beta}_N$, we obtain
\begin{align*}
\|(u_{N}-&\pi^{-1,\beta}_{N}u)^\prime\|^{2}+\gamma\|u_{N}-\pi^{-1,\beta}_{N}u\|^{2}
\le (1+q)^2 \big\|(\pi_N^{-1,\beta} u-u)^\prime\big\|^2 + \frac{(4\gamma+\beta^2q)^2}{16\gamma } \big\|\pi_N^{-1,\beta} u-u\big\|^2.
\end{align*}
This, along with the triangle inequality, leads to
\begin{align*}
&\|(u-u_{N})^\prime\|^{2}+\gamma \|u-u_{N}\|^{2}
\\ & \leq
2\|(u-\pi^{-1,\beta}_{N} u)^\prime\|^{2}+2\gamma \|u-\pi^{-1,\beta}_{N}u\|^{2}+2\|(u_{N}-\pi^{-1,\beta}_{N}u)^\prime\|^{2}+2\gamma \|u_{N}-\pi^{-1,\beta}_{N}u\|^{2}
\\
&\leq 
2 (q^2+2q+2) \big\|(\pi_N^{-1,\beta} u-u)^\prime\big\|^2 + \frac{\beta^4q^2+8\beta^2\gamma q+ 32 \gamma^2}{  8\gamma } \big\|\pi_N^{-1,\beta} u-u\big\|^2.
\end{align*}
Now taking $q=-\frac{2\sqrt{2\gamma}}{\beta}$ and using \eqref{errorProj}, we obtain
\begin{align*}
\|(u-u_{N}&)^\prime\|^{2}+\gamma \|u-u_{N}\|^{2} \le
\big[2+2\big(1-\tfrac{2\sqrt{2\gamma}}{\beta}\big)^2\big]\,
\Big[ \big\|(\pi_N^{-1,\beta} u-u)^\prime\big\|^2 + \frac{\beta^2}{4} \big\|\pi_N^{-1,\beta} u-u\big\|^2\Big].
\\
\lesssim&\, \big(1+ 4\gamma/\beta^2\big)\,  (\beta N)^{1-r} \|u\|^{2}_{r,-1,\beta} .
\end{align*}

We next verify the inequality \refe{4.5s} using a duality
argument. Consider the
auxiliary problem
\begin{equation}\label{4.7s}
-xv^{\prime\prime}(x)+\gamma xv(x)=u(x)-u_{N}(x)~ \text{ in }~ \Lambda, \qquad v(0) = 0. 
\end{equation}
Its weak form is
$$(\varphi^\prime,v^\prime)+\gamma(\varphi,v)=(\varphi,u-u_N)_{w^{-1}},\quad \forall \varphi \in H^{1}_{0}(\Lambda),$$
which admits a unique solution $v\in H_0^1(\Lambda)$.
Moreover, \eqref{4.7s} yields
\begin{align}
\label{stab} \|u-u_{N}\|_{w^{-1}}^2 = (-xv^{\prime\prime}+\gamma xv,
-v^{\prime\prime}+\gamma v ) = \|v^{\prime\prime}\|_{w}^2 + \gamma^2
\|v\|_{w}^2 + 2\gamma
\|v^{\prime}\|_{w}^2=\|v\|^2_{2,-1,2\sqrt{\gamma}},
\end{align}
where the second equality sign is derived by integration by parts.
Further, a direct computation leads to
\begin{align}
\label{tran}\|v\|_{2,-1,\beta}\leq\Big(1+\frac{\beta^2}{4\gamma} \Big) \|v\|_{2,-1,2\sqrt{\gamma}}.\end{align}
Hence, taking $\varphi=u-u_{N}$ and using the Cauchy-Schwartz inequality, we have
\begin{align}\label{errs}
\|u-u_{N} \|_{w^{-1}}^2&=(u-u_{N},u-u_{N})_{w^{-1}} =
((u-u_{N})^\prime,v^\prime)+\gamma (u-u_{N},v)\nonumber
\\
&\overset{\eqref{4.3s}}=((u-u_{N})^\prime,(v-\pi^{-1,\beta}_Nv)^\prime)+\gamma
(u-u_{N},v-\pi^{-1,\beta}_Nv)\nonumber
\\
&\le \Big[ \|(u-u_{N})^\prime\|^2  + \frac{4\gamma^2}{\beta^2}\|
u-u_{N}\|^2\Big]^{\frac12} \,  \Big[
\|(v-\pi^{-1,\beta}_Nv)^\prime\|^2 + \frac{\beta^2}{4}
\|v-\pi^{-1,\beta}_Nv\|^2 \Big]^{\frac12}\nonumber
\\
& \overset{\eqref{errorProj}}\lesssim   (\beta N)^{-1/2}  \|v\|_{2,-1,\beta} \Big[ \|(u-u_{N})^\prime\|^2  + \frac{4\gamma^2}{\beta^2}\| u-u_{N}\|^2\Big]^{\frac12}
\\
&\overset{\eqref{tran}} \lesssim  \Big(1+\frac{\beta^2}{4\gamma} \Big)  (\beta N)^{-1/2}  \|v\|_{2,-1,2\sqrt{\gamma}}\times \Big(1+\frac{2\sqrt{\gamma}}{\beta}\Big)  \Big[ \|(u-u_{N})^\prime\|^2  + \gamma\| u-u_{N}\|^2\Big]^{\frac12}
\nonumber\\
&\overset{\eqref{stab}}\lesssim  \Big(1+\frac{\beta^2}{4\gamma}
\Big)
 (\beta N)^{-1/2} \|u-u_{N}\|_{w^{-1}} \times  \Big(1+\frac{2\sqrt{\gamma}}{\beta}\Big)^2  (\beta N)^{1/2-r/2}   \|u\|_{r,-1,\beta}
\nonumber\\
& \overset{\eqref{4.4s}} \lesssim   \Big(1+\frac{\beta^2}{4\gamma}
\Big) \Big(1+\frac{2\sqrt{\gamma}}{\beta}\Big)^2    (\beta
N)^{-\frac{r}{2}}  \|u-u_{N}\|_{w^{-1}}
\|u\|_{r,-1,\beta},\nonumber
\end{align}
which ends the proof of \eqref{4.5s}.
\end{proof}

Next, we present the main theorem on the generalized Laguerre spectral method for the Robin boundary value problem
\eqref{5.1}.
\begin{theorem}\label{th:4.2}
Let $u$ and $u_N$ be the solutions to \eqref{5.1} and \eqref{5.3}, respectively.
If $u\in  H^{r}_{w^{r-1}}(\Lambda)$ and integer
$r\geq 1$, then for sufficiently large $N$,
\begin{equation}\label{4.4sr}
\mu \big|u(0)-u_N(0)\big|^2+\|(u-u_{N})^\prime\|^{2}+\gamma\|u-u_{N}\|^{2}\lesssim
 \Big( 1+\frac{4 \gamma}{\beta^2} \Big)\,   (\beta N)^{1-r} \|u\|^{2}_{r,-1,\beta},\end{equation}
and
\begin{equation}\label{4.5sr} \|u-u_N\|_{(1+w)^{-1}}\lesssim
 \Big(1+\frac{1}{\sqrt{2\mu}}\Big)\Big(1+\frac{\beta^2}{4\gamma} \Big)
\Big(1+\frac{2\sqrt{\gamma}}{\beta}\Big)^2 \,   (\beta N)^{-\frac{r}{2}} \|u\|_{r,-1,\beta}.
\end{equation}
\end{theorem}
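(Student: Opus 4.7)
The proof follows the two-step strategy of \reft{th:4.1}---Galerkin orthogonality for \eqref{4.4sr}, then a duality argument for \eqref{4.5sr}---but adapted to accommodate the Robin boundary term $\mu u(0)v(0)$ in $A_{\gamma,\mu}$. The decisive structural fact is that the endpoint-preserving property $(\pi_N^{-1,\beta}u)(0)=u(0)$ from \eqref{endpointVal} causes every Robin boundary contribution involving $\pi_N^{-1,\beta}u-u$ (or later $\pi_N^{-1,\beta}v-v$) to vanish. Consequently, subtracting from \eqref{5.3} the identity satisfied at $\pi_N^{-1,\beta}u$ yields
\begin{align*}
A_{\gamma,\mu}(\pi_N^{-1,\beta}u-u_N,v_N)=((\pi_N^{-1,\beta}u-u)',v_N')+\gamma(\pi_N^{-1,\beta}u-u,v_N),\quad v_N\in X_N^\beta,
\end{align*}
which is formally the same right-hand side as in the Dirichlet case. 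Choosing $v_N=\pi_N^{-1,\beta}u-u_N$, applying the parameter trick with the optimal $q=-2\sqrt{2\gamma}/\beta$, and invoking the triangle inequality together with \reft{th:3.1} yields \eqref{4.4sr}, with the extra non-negative term $\mu|u(0)-u_N(0)|^2$ naturally retained on the left-hand side.

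For \eqref{4.5sr}, I would construct the dual problem
\begin{align*}
-(1+x)v''(x)+\gamma(1+x)v(x)=u(x)-u_N(x),\qquad -v'(0)+\mu v(0)=0,
\end{align*}
whose weak formulation $A_{\gamma,\mu}(v,\varphi)=(\varphi,u-u_N)_{(1+w)^{-1}}$ is uniquely solvable in $H^1(\Lambda)$. Testing with $\varphi=u-u_N$ and integrating by parts rewrite the left-hand side as $\|u-u_N\|_{(1+w)^{-1}}^2$; Galerkin orthogonality with $v-\pi_N^{-1,\beta}v$ then eliminates the Robin boundary term (since $\pi_N^{-1,\beta}v(0)=v(0)$), leaving the same structure that appeared in \eqref{errs}. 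Cauchy--Schwarz with the matched splitting, \reft{th:3.1}, the transition $\|v\|_{2,-1,\beta}\le(1+\beta^2/(4\gamma))\|v\|_{2,-1,2\sqrt{\gamma}}$, and \eqref{4.4sr} then reduce the task to establishing the stability bound $\|v\|_{2,-1,2\sqrt{\gamma}}\le(1+1/\sqrt{2\mu})\|u-u_N\|_{(1+w)^{-1}}$.

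The main obstacle is precisely this stability bound, which is the source of the new factor $1+1/\sqrt{2\mu}$. I would multiply the strong dual equation by $x(-v''+\gamma v)$ and integrate by parts, using $\int_0^\infty xv''v\,dx=v(0)^2/2-\|v'\|_w^2$, to obtain
\begin{align*}
\|v\|_{2,-1,2\sqrt{\gamma}}^2=\int_0^\infty x(-v''+\gamma v)^2\,dx+\gamma v(0)^2\le\|u-u_N\|_{(1+w)^{-1}}^2+\gamma v(0)^2,
\end{align*}
where I have used the elementary inequality $x/(1+x)^2\le 1/(1+x)$. To control the boundary value, I would test the weak form with $\varphi=v$, bound $\|v\|_{(1+w)^{-1}}\le\|v\|$, and apply Cauchy--Schwarz plus AM--GM to derive $\mu v(0)^2+\|v'\|^2+(\gamma/2)\|v\|^2\le(1/(2\gamma))\|u-u_N\|_{(1+w)^{-1}}^2$, whence $\gamma v(0)^2\le(1/(2\mu))\|u-u_N\|_{(1+w)^{-1}}^2$. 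Combining both inequalities and invoking $\sqrt{1+1/(2\mu)}\le 1+1/\sqrt{2\mu}$ delivers the desired stability factor, which closes the argument and yields \eqref{4.5sr}.
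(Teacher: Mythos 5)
Your proposal is correct, and its overall architecture coincides with the paper's: the error equation obtained by inserting $\pi_N^{-1,\beta}u$, with every Robin boundary contribution killed by the endpoint preservation \eqref{endpointVal}, then the parameter trick with $q=-2\sqrt{2\gamma}/\beta$ and \reft{th:3.1} for \eqref{4.4sr}, and a duality argument for \eqref{4.5sr}. The only substantive divergence is in how the duality step is set up. The paper takes an arbitrary datum $g\in L^2_{w^{-1}+w^{-2}}(\Lambda)$, solves $-xv''+\gamma xv=g$ with the homogeneous Robin condition, derives the two strong-form identities \eqref{stabr}--\eqref{stabr1} (the unweighted one supplying $2\gamma\mu v^2(0)\le\|g\|^2_{w^{-2}}$, the $x$-weighted one giving $\|v\|^2_{2,-1,2\sqrt{\gamma}}=\|g\|^2_{w^{-1}}+\gamma v^2(0)$), and only at the very end recovers the target norm through the identification $\|u-u_N\|_{(1+w)^{-1}}=\sup_g|(u-u_N,g)_{w^{-1}}|/\|g\|_{w^{-1}+w^{-2}}$. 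You instead solve a single dual problem with weight $1+x$ and datum $u-u_N$, so that testing with $\varphi=u-u_N$ produces $\|u-u_N\|^2_{(1+w)^{-1}}$ directly, and you control the boundary term via an energy estimate on the weak form ($\mu v^2(0)\le\tfrac{1}{2\gamma}\|u-u_N\|^2_{(1+w)^{-1}}$) rather than via the $w^{-2}$-weighted strong identity. Both routes land on the same stability constant $\sqrt{1+1/(2\mu)}\le 1+1/\sqrt{2\mu}$ and hence the same final bound; yours dispenses with the supremum over $g$ and the somewhat awkward rewriting of $\|\cdot\|_{(1+w)^{-1}}$ as a $w^{-1}+w^{-2}$-weighted norm, while requiring $\gamma>0$ for the energy estimate (an assumption the paper's constants implicitly need anyway). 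I checked your two key computations, namely $\int_\Lambda x(-v''+\gamma v)^2\,dx=\|v\|^2_{2,-1,2\sqrt{\gamma}}-\gamma v^2(0)$ via $\int_\Lambda xv''v\,dx=\tfrac12 v^2(0)-\|v'\|^2_{w}$, and the pointwise bound $x(1+x)^{-2}\le(1+x)^{-1}$; both are sound, so the argument closes as claimed.
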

\begin{proof} By \eqref{projection} and \eqref{5.2},
\begin{align*}
\mu &\pi_N^{-1,\beta}u(0) v_N(0) +    ((\pi_N^{-1,\beta}u)^{\prime}, v_N^{\prime}) + \gamma(  \pi_N^{-1,\beta}u , v_N)
\\
&= \mu u(0) v_N(0)  +  (u^{\prime}, v_N^{\prime}) +  \gamma ( u, v_N) + ( (\pi_N^{-1,\beta} u-u)^\prime,v_{N}^\prime)+\gamma (\pi_N^{-1,\beta}  u-u,v_{N})
\\
&= (f,v_N) + \eta v_N(0) +( (\pi_N^{-1,\beta} u-u)^\prime,v_{N}^\prime)+\gamma (\pi_N^{-1,\beta}  u-u,v_{N}), \quad v_N\in X_N^{\beta}.
\end{align*}
Substracting the above equation by \eqref{5.3} yields
\begin{align*}
\mu (\pi_N^{-1,\beta}&u(0)-u_N(0)) v_N(0) +    ((\pi_N^{-1,\beta}u-u_N)^{\prime}, v_N^{\prime}) + \gamma(  \pi_N^{-1,\beta}u-u_N , v_N)
\\
 =&\,
  ( (\pi_N^{-1,\beta} u-u)^\prime,v_{N}^\prime)+\gamma (\pi_N^{-1,\beta}  u-u,v_{N}).
\end{align*}
Then a similar argument as in the proof  of  \eqref{4.4s}  gives \eqref{4.4sr}.

We next verify the inequality \refe{4.5sr} using a duality argument.
For any $g\in L^2_{w^{-1}+w^{-2}}(\Lambda)$, we consider the auxiliary
problem
\begin{equation}\label{4.7sr}
-xv^{\prime\prime}(x)+\gamma xv(x)=g(x)~ \text{ in }~ \Lambda, \qquad -v^{\prime}(0)+\mu v(0) = 0.
\end{equation}
Its weak form is
$$\mu\varphi(0)v(0)+(\varphi^\prime,v^\prime)+\gamma(\varphi,v)=(\varphi,g)_{w^{-1}},\quad \forall \varphi \in H^{1}(\Lambda),$$
which admits a unique solution $v\in H^1(\Lambda)$.
Taking $\varphi=u-u_{N}$, we have
\begin{equation}\label{sr1}\mu(u(0)-u_N(0))v(0)+(u^\prime-u_N^\prime,v^\prime)+\gamma(u-u_{N},v)=(u-u_{N},g)_{w^{-1}}.\end{equation}
Moreover, by \refe{5.2} and \refe{5.3} we get
\begin{equation}\label{sr2}\mu(u(0)-u_N(0))v_N(0)+(u^\prime-u_N^\prime,v_N^\prime)+\gamma(u-u_{N},v_N)=0,\quad
\forall v_{N}\in X^{\bt}_{N}.\end{equation} Let
$v_N=\pi_N^{-1,\beta}v$ in \refe{sr2}. Then by \refe{sr1},
\refe{sr2} and \refe{endpointVal} we deduce
\begin{equation}\label{sr3}((u-u_N)^\prime,(v-\pi_N^{-1,\beta}v)^\prime)+\gamma(u-u_{N},v-\pi_N^{-1,\beta}v)=(u-u_{N},g)_{w^{-1}}.\end{equation}
Further by integration by parts, \eqref{4.7sr} yields
\begin{align}
\label{stabr} \|g\|_{w^{-2}}^2 &= (-v^{\prime\prime}+\gamma v,
-v^{\prime\prime}+\gamma v ) = \|v^{\prime\prime}\|^2 + \gamma^2
\|v\|^2 + 2\gamma \|v^{\prime}\|^2+2\gamma\mu
v^2(0),
\end{align}
and
\begin{align}
\label{stabr1} \|g\|_{w^{-1}}^2 &= (-xv^{\prime\prime}+\gamma xv,
-v^{\prime\prime}+\gamma v ) = \|v^{\prime\prime}\|^2_{w} + \gamma^2
\|v\|^2_{w} + 2\gamma \|v^{\prime}\|^2_{w}-\gamma v^2(0).
\end{align}
%
Hence
\begin{align}
\label{stabr2} \|v\|^2_{2,-1,2\sqrt{\gamma}}=\|v^{\prime\prime}\|^2_{w} + \gamma^2
\|v\|^2_{w} + 2\gamma \|v^{\prime}\|^2_{w} \leq \|g\|_{w^{-1}}^2+\frac{1}{2\mu}\|g\|_{w^{-2}}^2\leq (1+\frac{1}{2\mu})\|g\|_{w^{-1}+w^{-2}}^2.
\end{align}
Thus, a similar argument as \eqref{errs}  gives
\begin{align*}
|(u-u_{N},g)_{w^{-1}}|&\overset{\eqref{sr3}}=|((u-u_{N})^\prime,(v-\pi^{-1,\beta}_Nv)^\prime)+\gamma
(u-u_{N},v-\pi^{-1,\beta}_Nv)|
\\
&\overset{\eqref{errs}} \lesssim  \Big(1+\frac{\beta^2}{4\gamma}
\Big)  (\beta N)^{-1/2}  \|v\|_{2,-1,2\sqrt{\gamma}}\times
\Big(1+\frac{2\sqrt{\gamma}}{\beta}\Big)  \Big[
\|(u-u_{N})^\prime\|^2  + \gamma\| u-u_{N}\|^2\Big]^{\frac12}
\\
&\overset{\eqref{stabr2}}\lesssim  \Big(1+\frac{1}{\sqrt{2\mu}}\Big)\Big(1+\frac{\beta^2}{4\gamma}
\Big)
 (\beta N)^{-1/2}  \|g\|_{w^{-1}+w^{-2}} \times  \Big(1+\frac{2\sqrt{\gamma}}{\beta}\Big)^2  (\beta N)^{1/2-r/2}   \|u\|_{r,-1,\beta}
\\
&  \lesssim   \Big(1+\frac{1}{\sqrt{2\mu}}\Big)\Big(1+\frac{\beta^2}{4\gamma} \Big)
\Big(1+\frac{2\sqrt{\gamma}}{\beta}\Big)^2    (\beta
N)^{-\frac{r}{2}}  \|g\|_{w^{-1}+w^{-2}}  \|u\|_{r,-1,\beta}.
\end{align*}
Finally, we obtain
\begin{align*}&\|u-u_N\|_{(1+w)^{-1}}=\|\frac{x}{1+x}(u-u_N)\|_{w^{-1}+w^{-2}}=\sup_{g\in L^2_{w^{-1}+w^{-2}}(\Lambda),~g\neq 0}\frac{|(\frac{x}{1+x}(u-u_N),g)_{w^{-1}+w^{-2}}|}{\|g\|_{w^{-1}+w^{-2}}}
\\&=\sup_{g\in L^2_{w^{-1}+w^{-2}}(\Lambda),~g\neq 0}\frac{|(u-u_N,g)_{w^{-1}}|}{\|g\|_{w^{-1}+w^{-2}}}
\lesssim  \Big(1+\frac{1}{\sqrt{2\mu}}\Big) \Big(1+\frac{\beta^2}{4\gamma} \Big)
\Big(1+\frac{2\sqrt{\gamma}}{\beta}\Big)^2    (\beta
N)^{-\frac{r}{2}} \|u\|_{r,-1,\beta},\end{align*}
 which ends the proof of \eqref{4.5sr}.
\end{proof}

\section{Numerical experiments}\setcounter{equation}{0} \setcounter{lmm}{0} \setcounter{thm}{0} \setcounter{table}{0}
In this section, we examine the effectiveness and the accuracy of  the fully diagonalized Laguerre spectral method
for solving second order elliptic equations on the half line.
The righthand terms $\{(f,\mathcal{R}_k^{\beta})\}_{k=0}^N$ or  $\{(f,\mathcal{S}_k^{\beta})\}_{k=1}^N,$  as well as the discrete errors,
are evaluated through the Laguerre-Gauss quadrature with $2N+1$ nodes (cf. \cite{WGW}).

\subsection{Dirichlet boundary value  problems}
We first  examine  the fully diagonalized Laguerre spectral method for the Dirichlet boundary value problem
 \refe{4.1s}.   We take $\gamma=1$, $\eta=1$ in \refe{4.1s} and consider the following  three cases
of the smooth solutions with different decay properties.
 \begin{itemize}
\item $u(x)=e^{-x}(\sin(2x)+\eta),$ which is exponential decay with oscillation.
In Figures \ref{fig1}, \ref{fig2} and \ref{fig1.1}, we plot the
$\log_{10}$ of the discrete $L^2$-, $H^1$- and $L^2_{w^{-1}}$-errors
vs. $N$ with various values of $\beta$. Clearly, the approximate
solutions converge at exponential rates. We also see that for fixed
$N$, the scheme with $\beta=4$ produces better numerical results
than that with $\beta=1,~2.$ However, the choice of the optimal
scaling factor $\beta$ for a given differential equation is still an
open problem. Generally speaking, the choice of $\beta$ depends on
the asymptotic behavior of solutions.

\item $u(x)=(x+\eta)(1+x)^{-h}$ with $h>1,$ which is
algebraic decay. In Figures \ref{fig3}, \ref{fig4} and \ref{fig2.1},
we plot the $\log_{10}$ of the discrete $L^2$-, $H^1$- and
$L^2_{w^{-1}}$-errors vs. $\log_{10} (\beta N)$ with fixed $\bt=4$
and various values of $h$. They show that the faster the exact
solution decays, the smaller the numerical errors would be. Clearly,
\begin{align*}
 \|(x+\eta)(1+x)^{-h}\|_{r,-1,\beta}<\infty,\quad \forall 0\le  r<2h-2.
\end{align*}
According to \reft{th:4.1}, the expected $L^2$- and $H^1$-errors can
be bounded by $c(\beta N)^{3/2-h+\varepsilon}$ for any
$\varepsilon>0$, and the expected $L^2_{w^{-1}}$-error can be
bounded by $c(\beta N)^{1-h+\varepsilon}$ for any $\varepsilon>0$.
The observed convergence rates plotted in Figures \ref{fig3},
\ref{fig4} and \ref{fig2.1} agree well with the theoretical results.
\item $u(x)=(\sin(2x)+\eta)(1+x)^{-h}$, which is
algebraic decay with oscillation. In Figures \ref{fig5}, \ref{fig6}
and \ref{fig5.1}, we plot the $\log_{10}$ of the discrete $L^2$-,
$H^1$- and $L^2_{w^{-1}}$-errors vs. $\log_{10} (\beta N)$ with
fixed $\bt=4$ and various values of $h$.
Since
\begin{align*}
 \|(\sin(2x)+\eta)(1+x)^{-h}\|_{r,-1,\beta}<\infty,\quad \forall 0\le  r<2h,
\end{align*}
the expected $L^2$-, $H^1$- (resp. $L^2_{w^{-1}}$-) errors  given by
\reft{th:4.1} can be bounded by $c(\beta N)^{1/2-h+\varepsilon}$
(resp. $c(\beta N)^{-h+\varepsilon}$) for any $\varepsilon>0$. The
observed convergence rates plotted in Figures \ref{fig5}, \ref{fig6}
and \ref{fig5.1} also agree well with the theoretical results.
\end{itemize}

\begin{figure}[htbp]
\begin{minipage}{0.45\linewidth}
\centering
\includegraphics [height=2.0in]{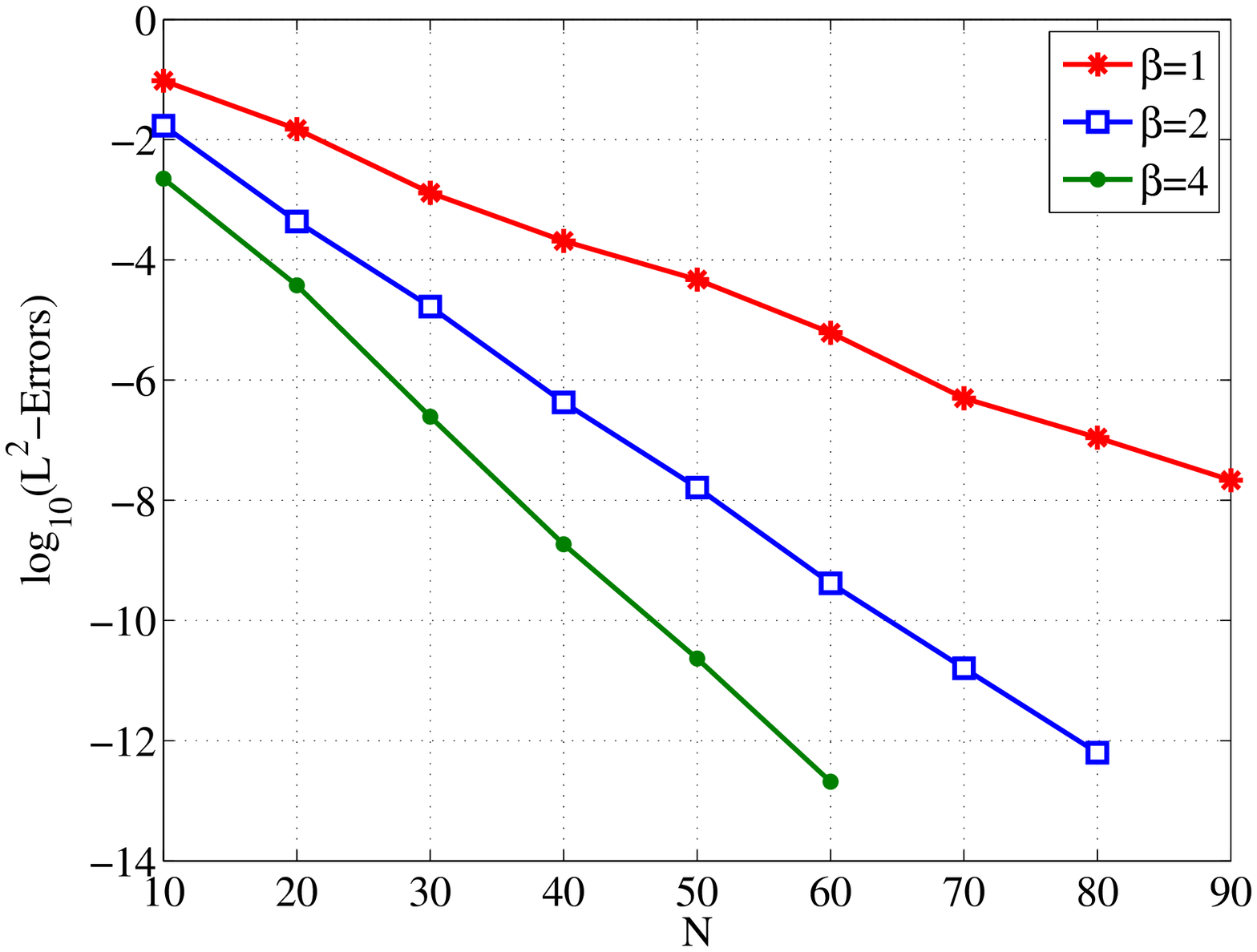}
\caption{\footnotesize $L^2$-errors.\label{fig1}}
\end{minipage}\qquad
\begin{minipage}{0.45\linewidth}
\centering
\includegraphics [height=2.0in]{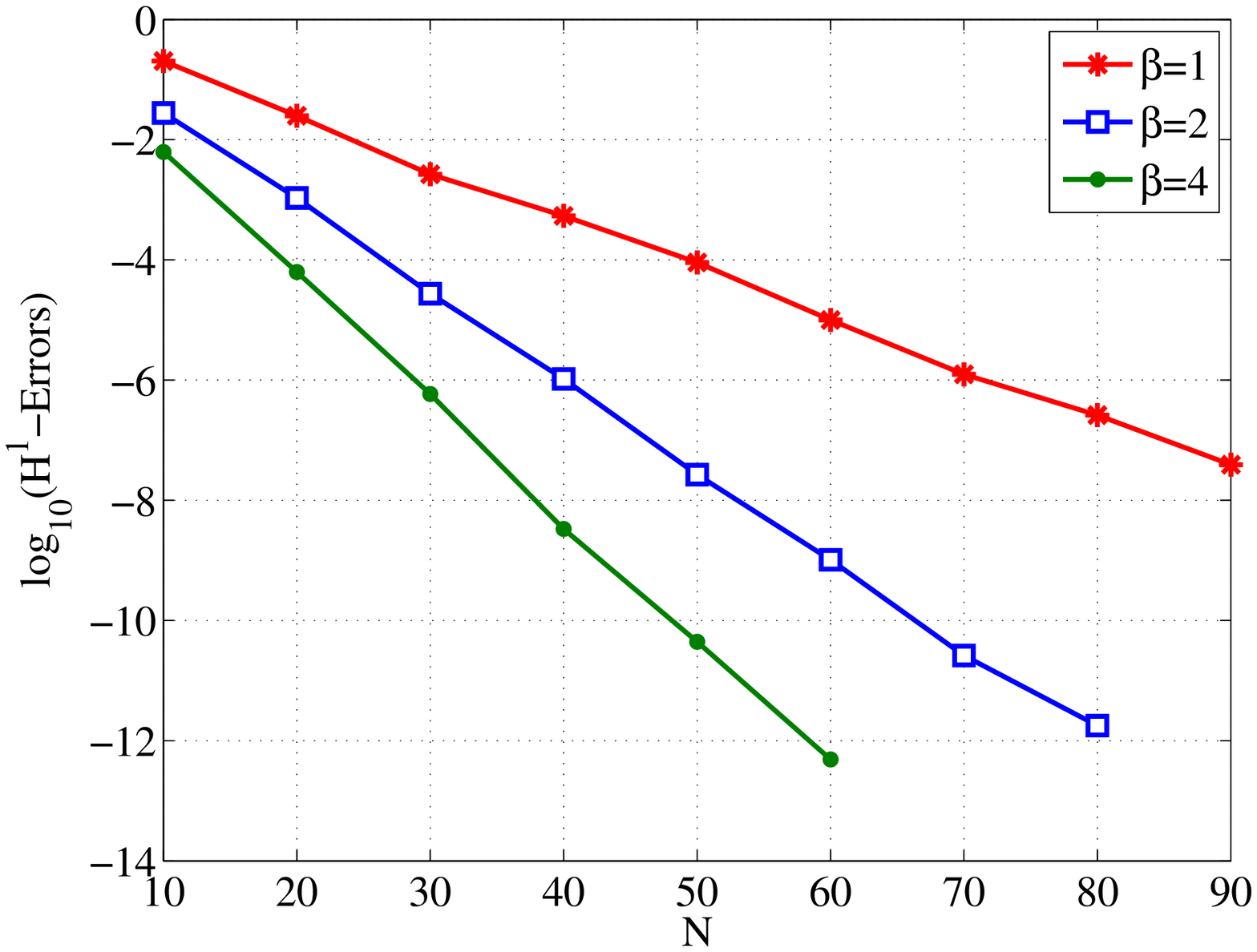}
\caption{\footnotesize $H^1$-errors.\label{fig2}}
\end{minipage}
\end{figure}

\begin{figure}[htbp]
\begin{minipage}{0.45\linewidth}
\centering
\includegraphics [height=2.0in]{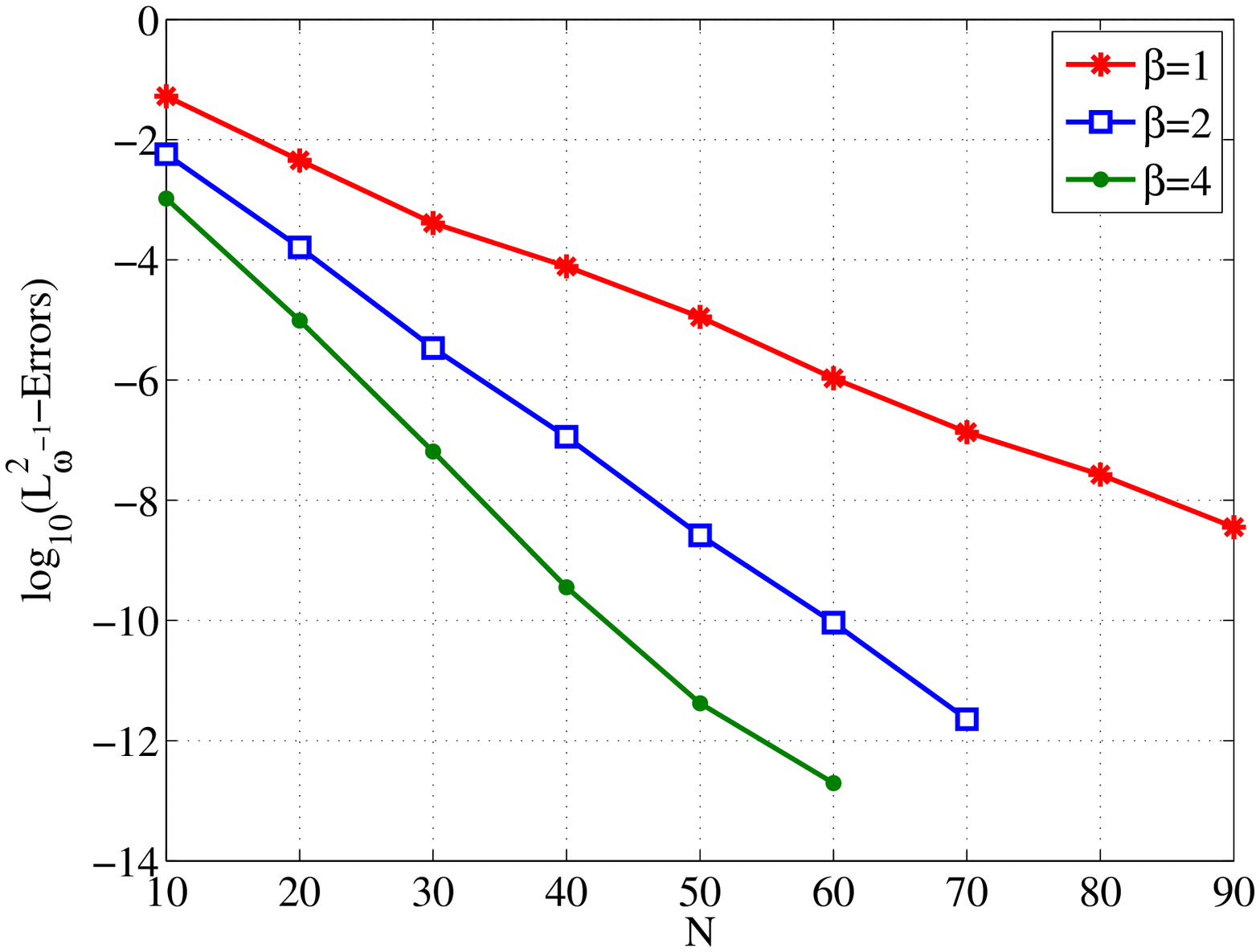}
\caption{\footnotesize $L^2_{w^{-1}}$-errors.\label{fig1.1}}
\end{minipage}\qquad
\begin{minipage}{0.45\linewidth}
\centering
\includegraphics [height=2.1in]{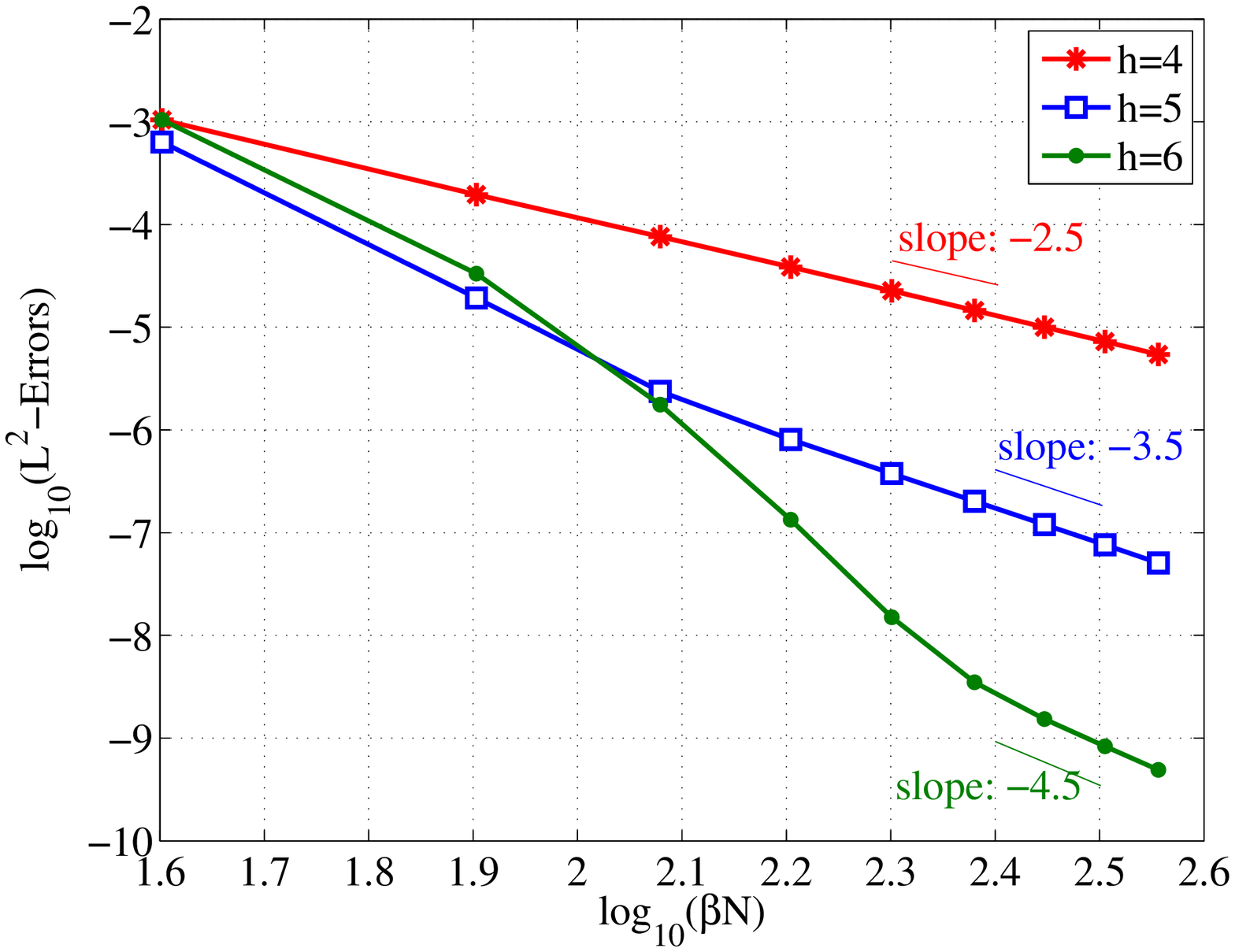}
\caption{\footnotesize $L^2$-errors with $\beta=4$.\label{fig3}}
\end{minipage}
\end{figure}

\begin{figure}[htbp]
\begin{minipage}[t]{0.45\linewidth}
\centering
\includegraphics [height=2.1in]{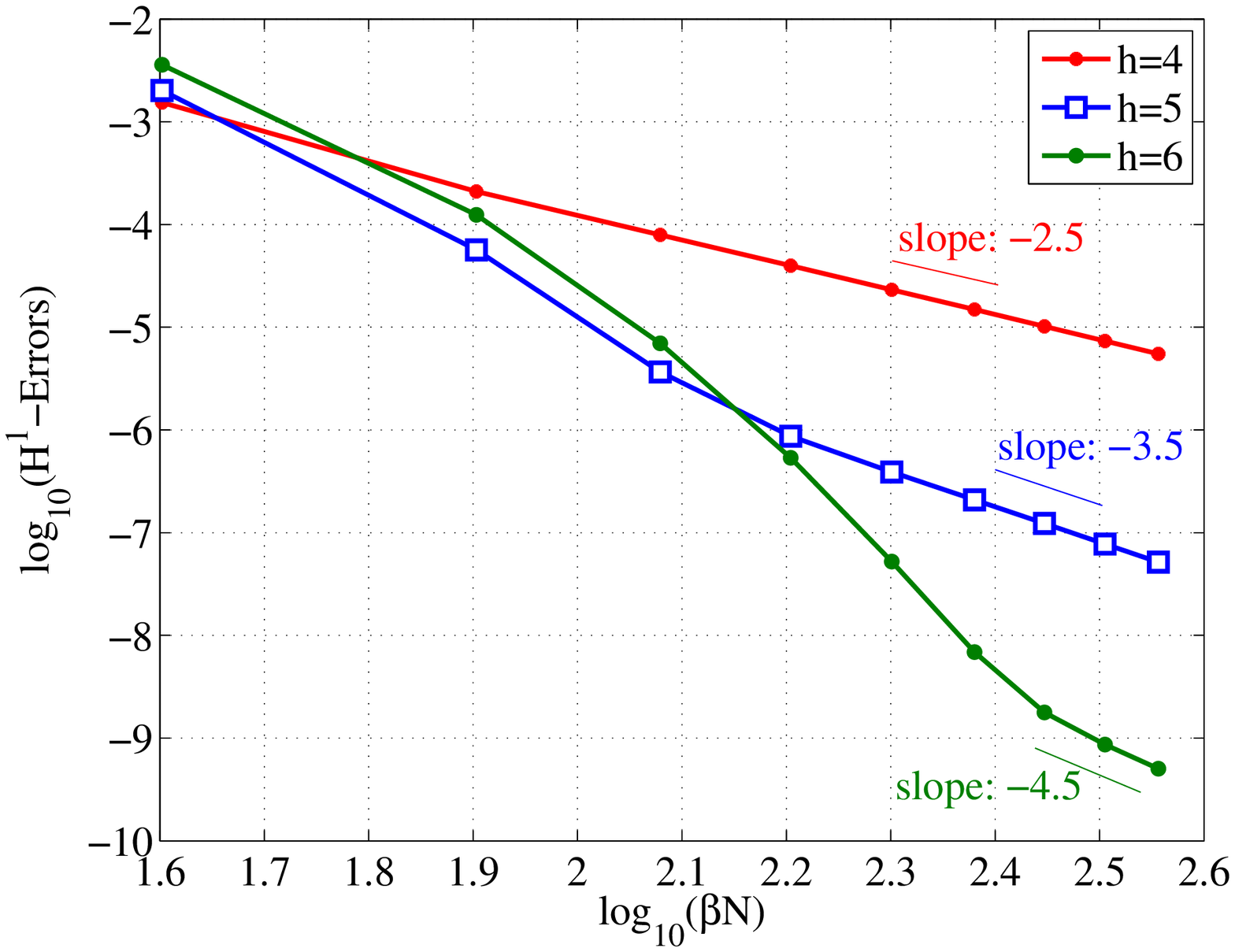}
\caption{\footnotesize $H^1$-errors with $\beta=4$.\label{fig4}}
\end{minipage}\qquad
\begin{minipage}[t]{0.45\linewidth}
\centering
\includegraphics [height=2.1in]{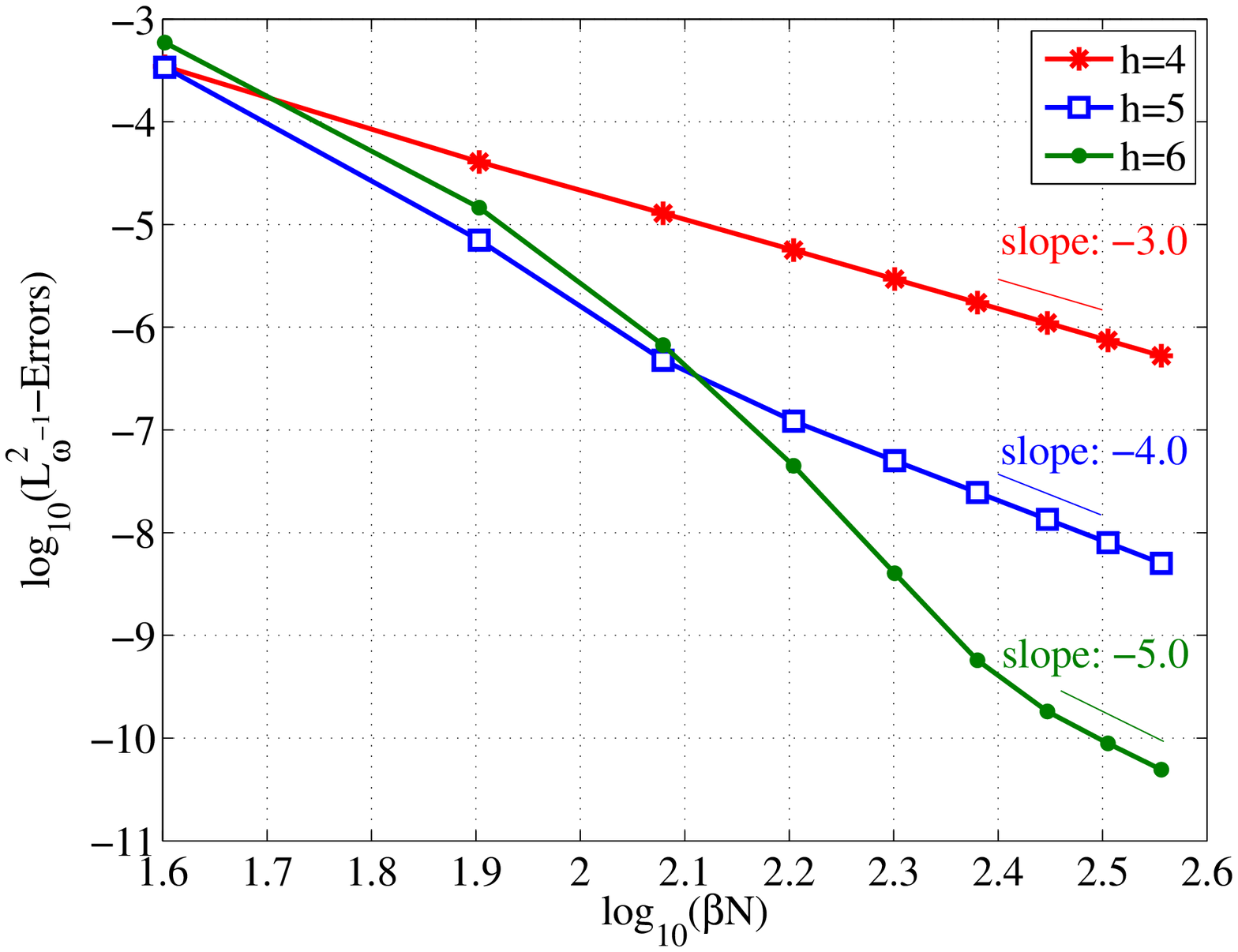}
\caption{\footnotesize $L^2_{w^{-1}}$-errors with
$\beta=4$.\label{fig2.1}}
\end{minipage}
\end{figure}

\begin{figure}[htbp]
\begin{minipage}[t]{0.45\linewidth}
\centering
\includegraphics [height=2.10in]{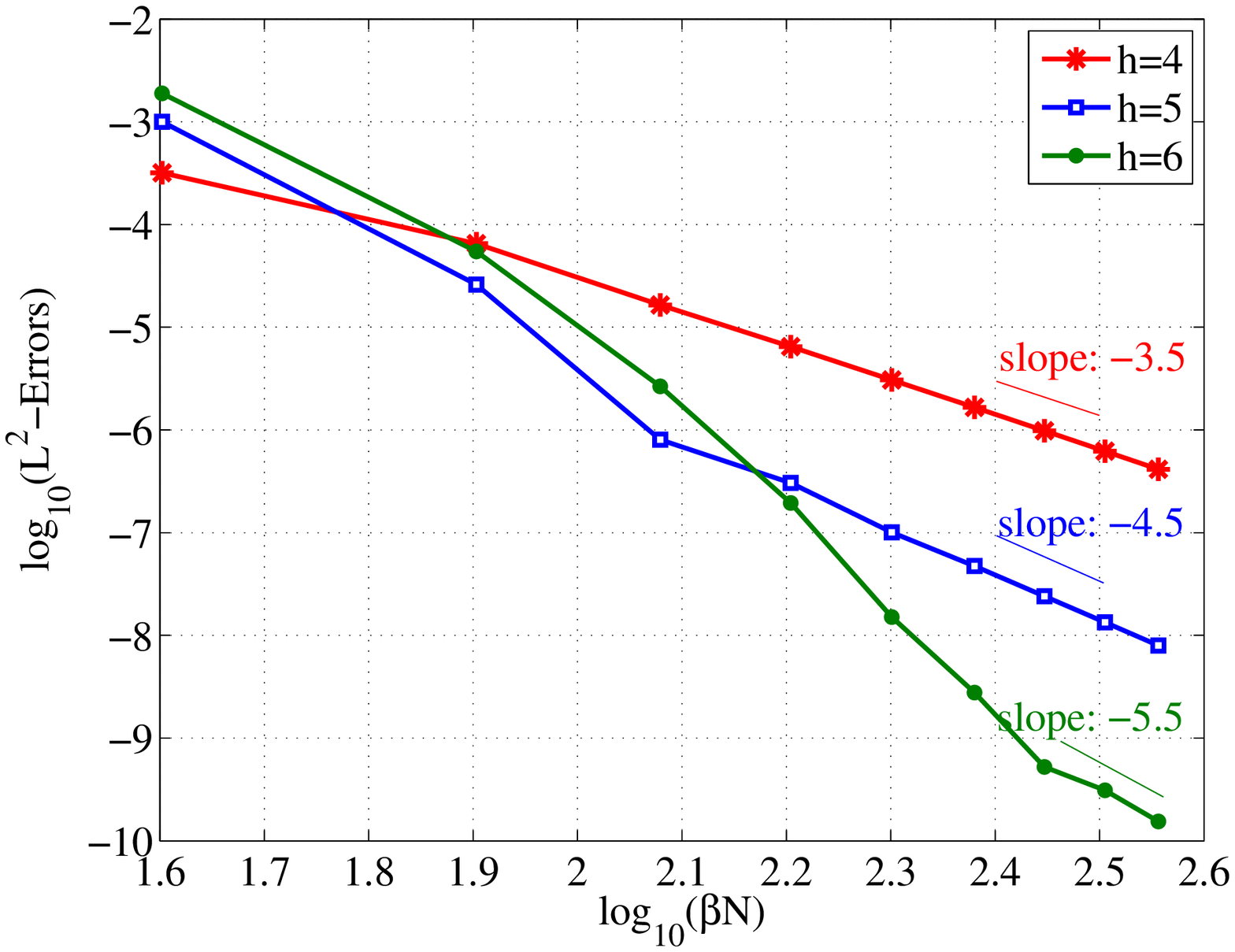}
\caption{\footnotesize $L^2$-errors with $\beta=4$.\label{fig5}}
\end{minipage}\qquad
\begin{minipage}[t]{0.45\linewidth}
\centering
\includegraphics [height=2.10in]{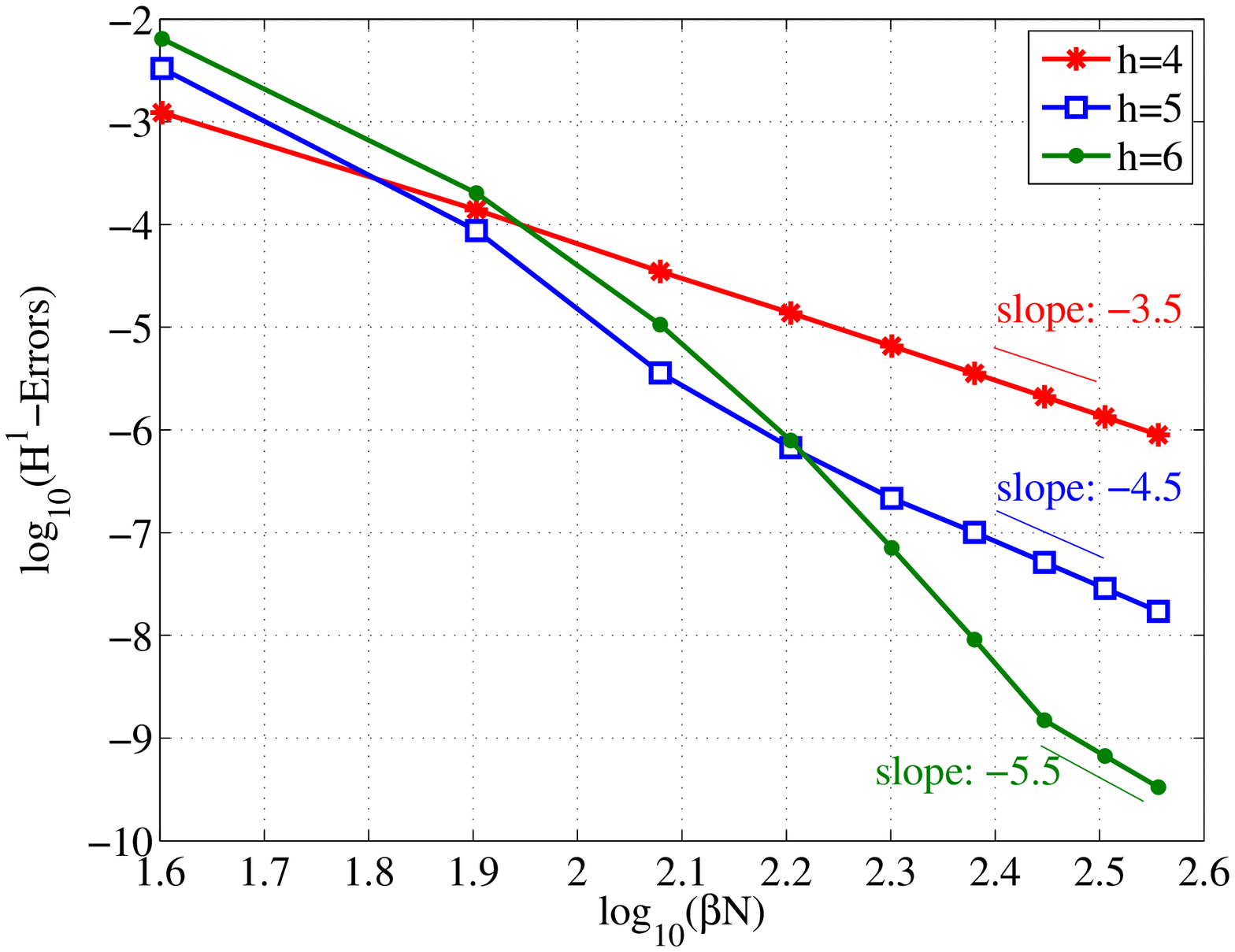}
\caption{\footnotesize $H^1$-errors with $\beta=4$.\label{fig6}}
\end{minipage}
\end{figure}

\begin{figure}[htbp]
\begin{minipage}[t]{0.45\linewidth}
\centering
\includegraphics [height=2.10in]{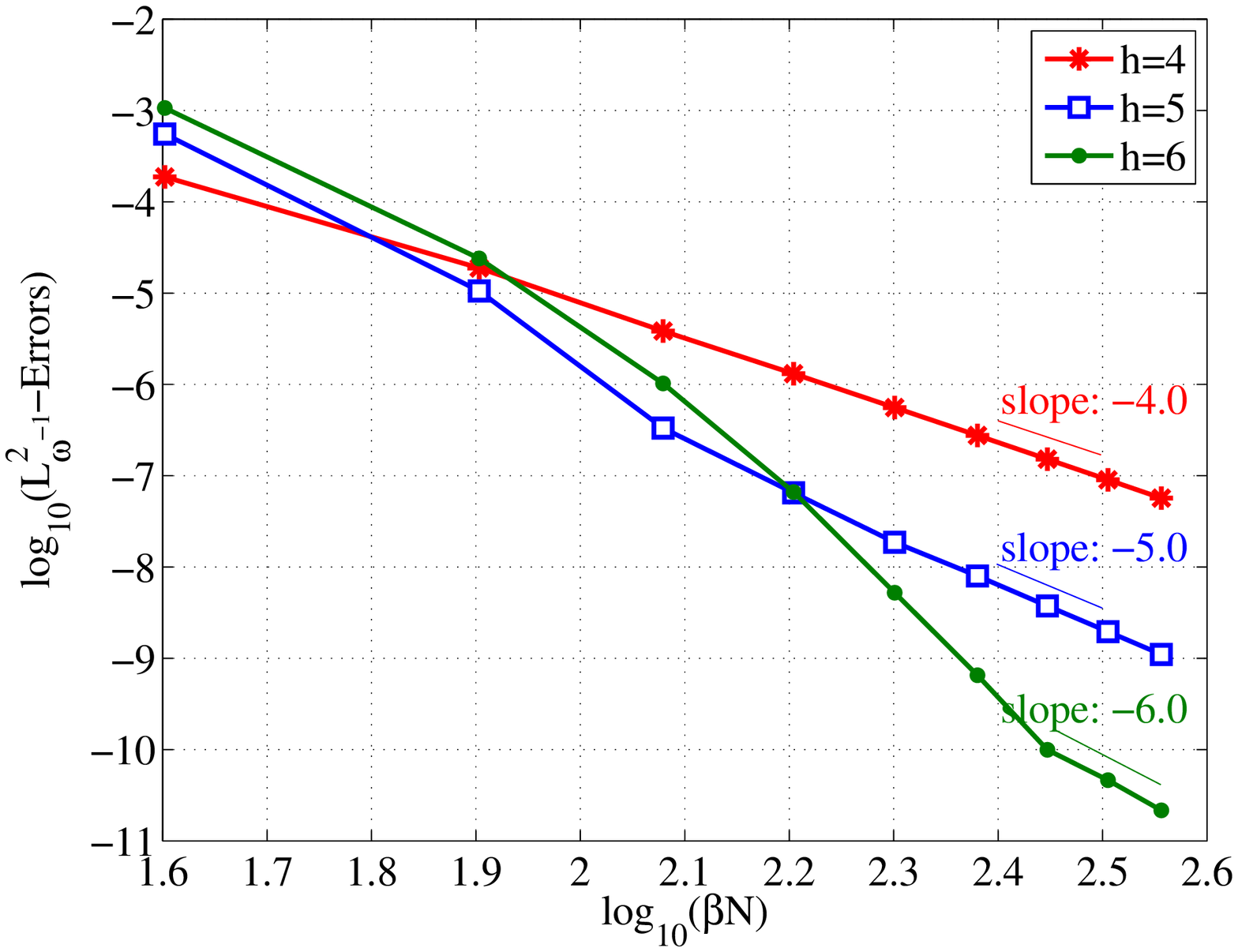}
\caption{\footnotesize $L^2_{w^{-1}}$-errors with
$\beta=4$.\label{fig5.1}}
\end{minipage}\qquad
\begin{minipage}[t]{0.45\linewidth}
\centering
\includegraphics [height=2.0in]{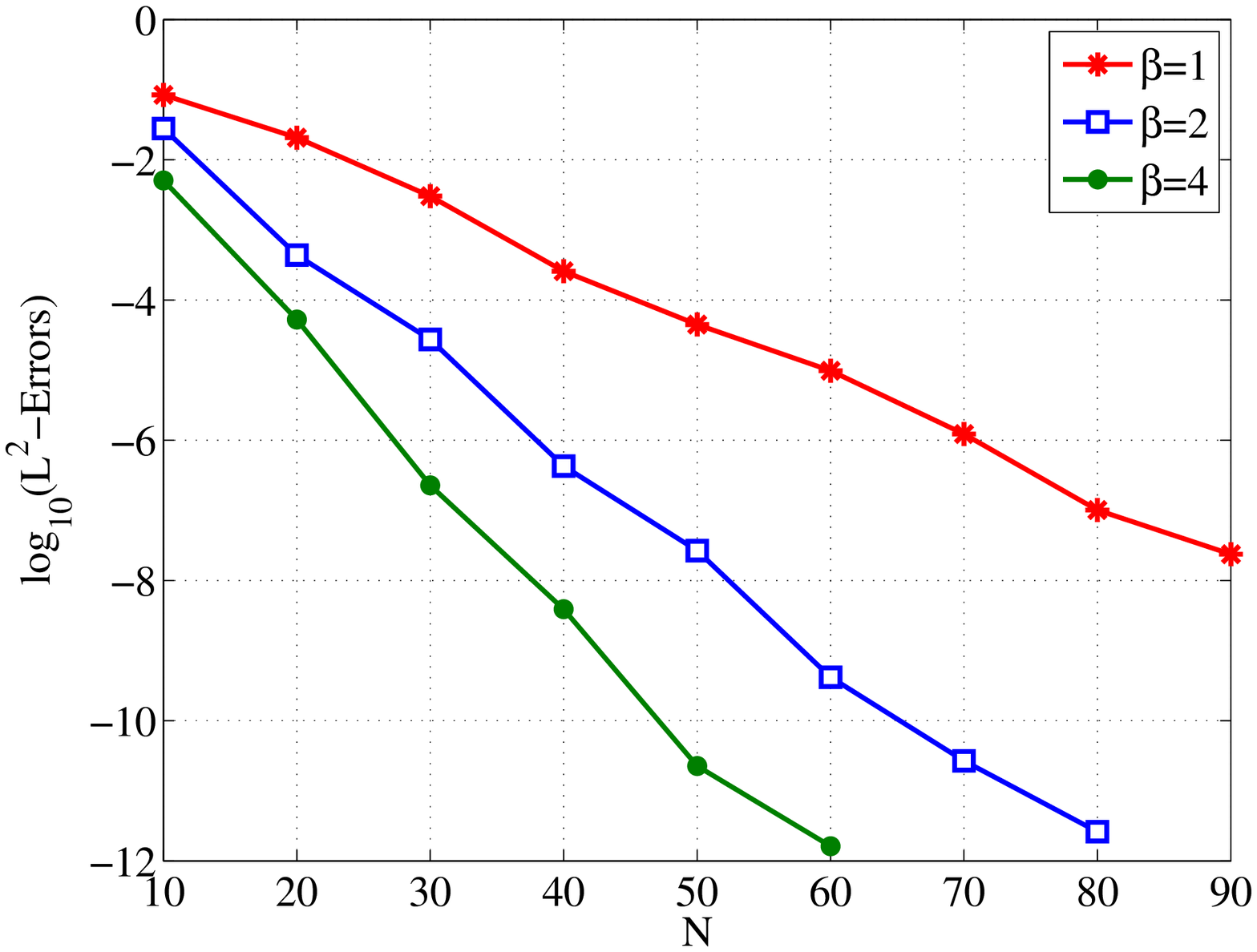}
\caption{\footnotesize $L^2$-errors.\label{fig7}}
\end{minipage}
\end{figure}

\subsection{Robin boundary value problems}
We take $\mu=1$, $\gamma=1$ and $\eta=0$  for  the Robin boundary value problem \eqref{5.1} and  consider two examples with different decay properties
for the exact solutions.

 \begin{itemize}
\item
$u(x)=e^{-x}(\sin(2x)+\cos(2x)),$ which is exponential decay with
oscillation. In Figures \ref{fig7}, \ref{fig8} and \ref{fig8.1}, we
plot the $\log_{10}$ of the discrete $L^2$-, $H^1$- and
$L^2_{(1+w)^{-1}}$-errors vs. $N$ with various values of $\beta$.
Clearly, the approximate solutions converge at exponential rates.
\end{itemize}
\begin{itemize}
\item
$u(x)=(\sin(2x)+\cos(2x))(h+x)^{-h},$ which is algebraic decay with
oscillation. In Figures \ref{fig9}, \ref{fig10} and \ref{fig9.1}, we
plot the $\log_{10}$ of the discrete $L^2$-, $H^1$- and
$L^2_{(1+w)^{-1}}$-errors vs. $\log_{10}(\beta N)$ with fixed $\bt=4$
and
various values of $h$. 
Since
\begin{align*}
 \|(\sin(2x)+\cos(2x))(h+x)^{-h}\|_{r,-1,\beta}<\infty,\quad \forall 0\le  r<2h,
\end{align*}
the expected $L^2$- and $H^1$- (resp. $L^2_{(1+w)^{-1}}$-) errors given by \reft{th:4.2} can be
bounded by $c(\beta N)^{1/2-h+\varepsilon}$ (resp. $c(\beta N)^{-h+\varepsilon}$) for any $\varepsilon>0$.
Again, the observed convergence rates plotted in Figures \ref{fig9},
\ref{fig10} and \ref{fig9.1} agree well with the theoretical results.
\end{itemize}
\begin{figure}[htbp]
\begin{minipage}[t]{0.45\linewidth}
\centering
\includegraphics [height=1.95in]{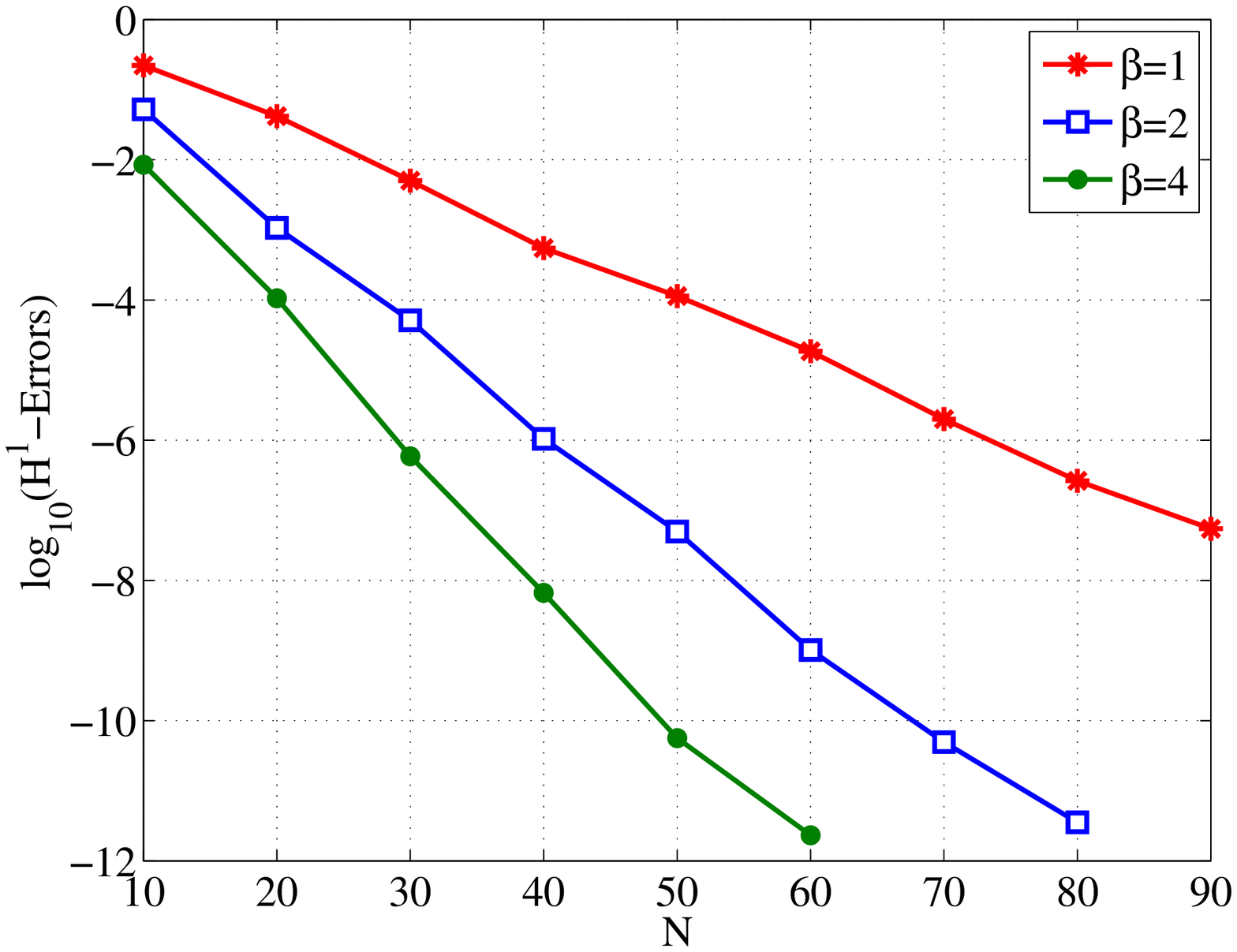}
\caption{\footnotesize $H^1$-errors.\label{fig8}}
\end{minipage}\qquad
\begin{minipage}[t]{0.45\linewidth}
\centering
\includegraphics [height=1.95in]{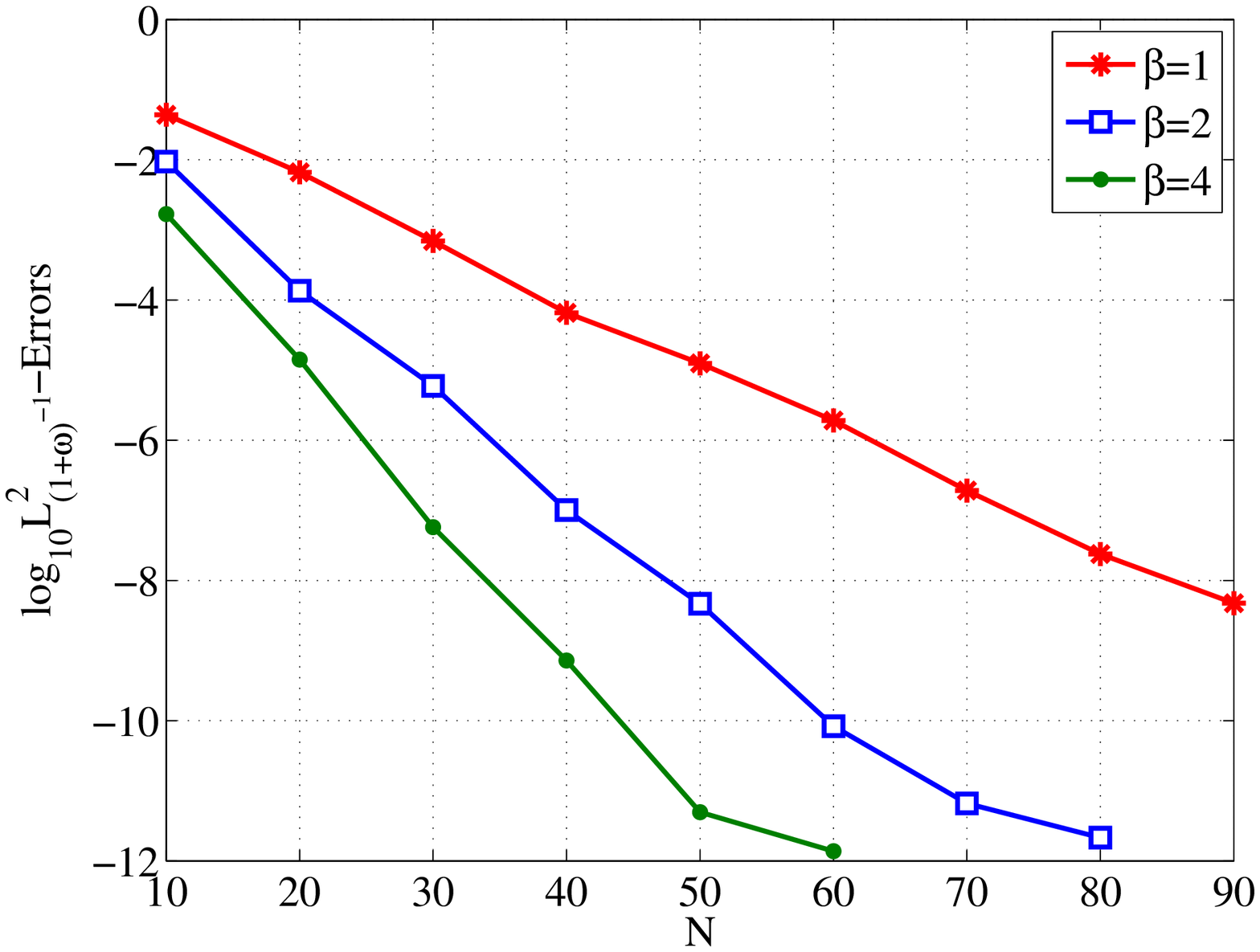}
\caption{\footnotesize $L^2_{(1+w)^{-1}}$-errors.\label{fig8.1}}
\end{minipage}
\end{figure}
\begin{figure}[htbp]
\begin{minipage}[t]{0.45\linewidth}
\centering
\includegraphics [height=2.10in]{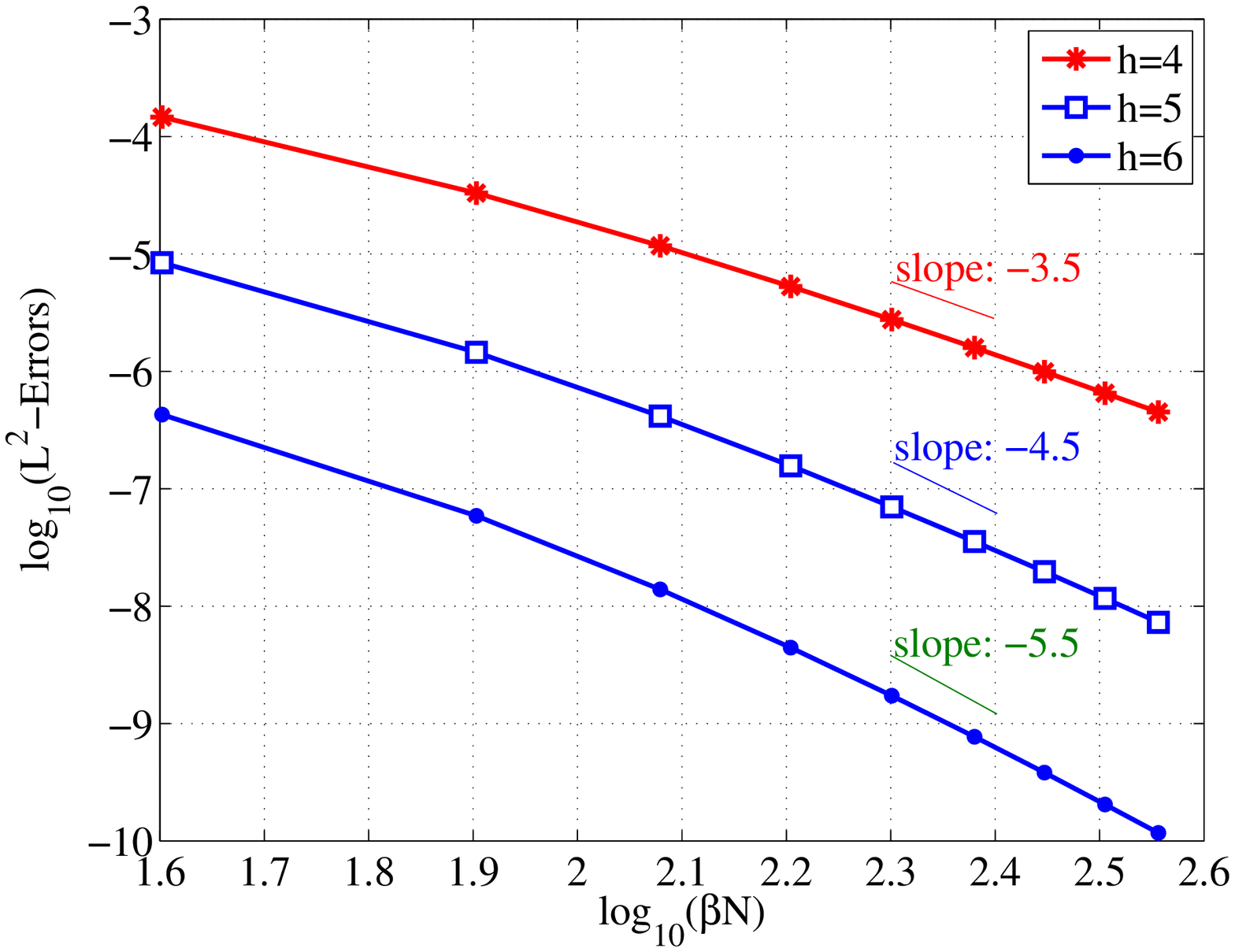}
\caption{\footnotesize $L^2$-errors with $\beta=4$.\label{fig9}}
\end{minipage}\qquad
\begin{minipage}[t]{0.45\linewidth}
\centering
\includegraphics [height=2.10in]{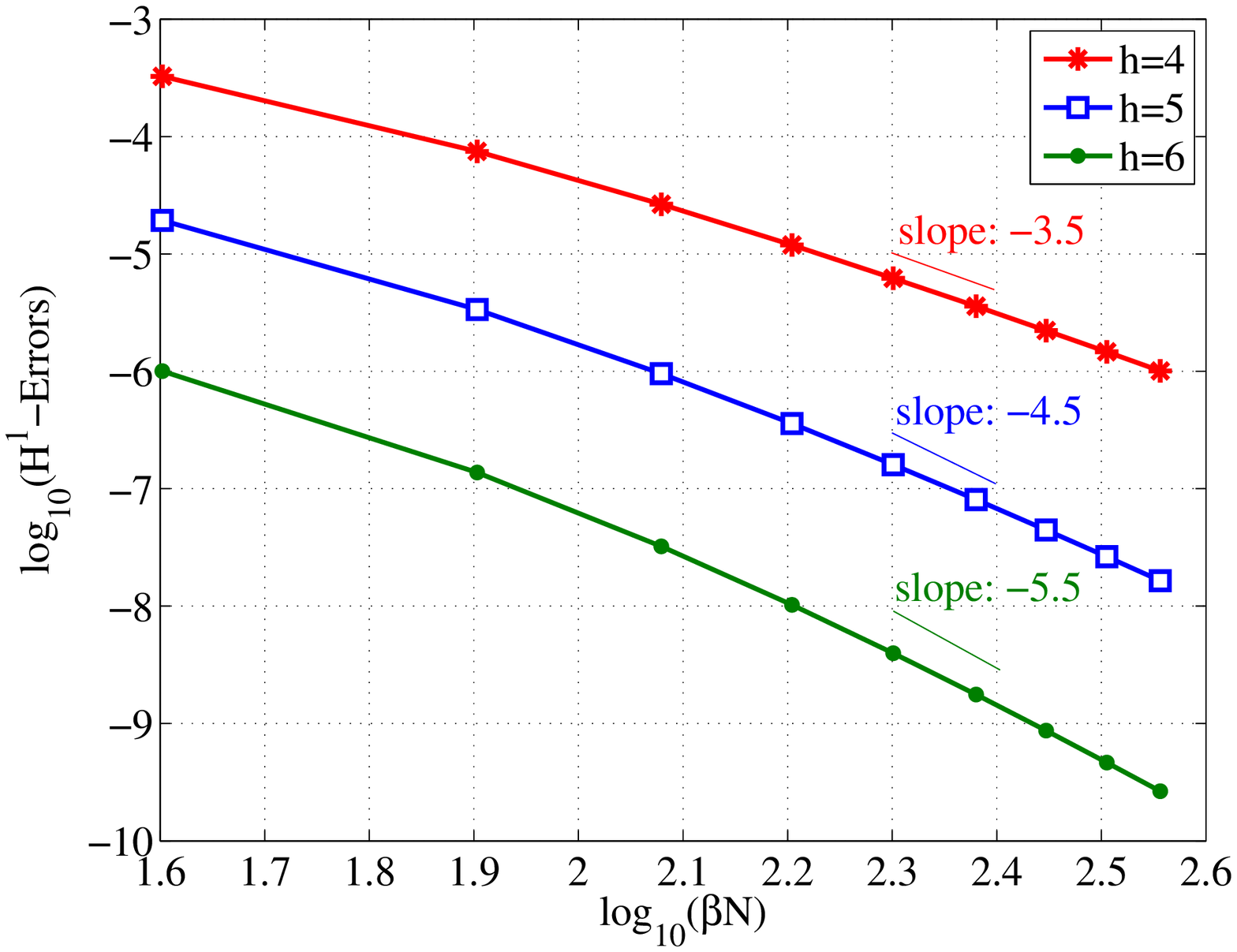}
\caption{\footnotesize $H^1$-errors with $\beta=4$.\label{fig10}}
\end{minipage}
\end{figure}

\begin{figure}[htbp]
\begin{minipage}[t]{0.45\linewidth}
\centering
\includegraphics [height=2.10in]{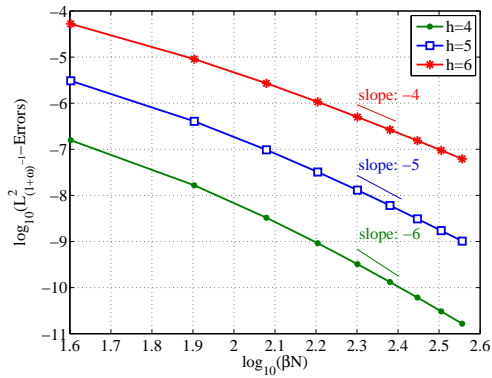}
\caption{\footnotesize $L^2_{(1+w)^{-1}}$-errors with
$\beta=4$.\label{fig9.1}}
\end{minipage}
\end{figure}
\subsection{On the condition numbers}
To demonstrate the essential  superiority  of our diagonalized Laguerre spectral method
to the  classic Laguerre  methods,
we finally  examine the issue on condition numbers for the resulting algebraic systems.
The diagonalized Laguerre spectral method use the Sobolev-orthogonal Laguerre  functions
$\{\mathcal{S}_{k}^{\beta}(x)/\sqrt{\varrho_k} \}_{k=1}^N$  and $\{\mathcal{R}_{k}^{\beta}(x)/\sqrt{\rho_k} \}_{k=0}^N$ as the basis functions
for \eqref{4.3s} and \eqref{5.3}, respectively.  All the
 condition numbers of the corresponding total stiff matrices  are equal to $1$. While in
the classical Laguerre spectral method,
the basis functions for \eqref{4.3s} and \eqref{5.3} are chosen as $\{xl_{k}^{0,\beta}(x)\}_{k=0}^{N-1}$
and $\{l_{k}^{0,\beta}(x)\}_{k=0}^{N}$, respectively. The  corresponding total stiff matrices have off-diagonal entries.

In Table \ref{Tab3.1} below, we list the condition numbers of the total stiff  matrices
of the  classical Laguerre spectral method for \refe{4.1s} with  $\gamma=1$ versus various $N$ and $\beta$.
The condition numbers of the  classical Laguerre spectral method for \refe{5.1} with  $\gamma=1$ and $\mu=1$
are tabulated in Table \ref{Tab3.2}.
We note  that the condition numbers of the resulting systems  increase asymptotically as  $\mathcal{O}(N^2).$

\begin{center}
\begin{table}[h]\begin{minipage}{0.5\linewidth}
\begin{tabular}{|c|c|c|c|}
\hline
N&$\beta=1$&$\beta=2$&$\beta=3$\\
\hline
10&1.5899E+03&5.1603E+02&2.8771E+02\\
30&2.3557E+04&6.8162E+03&3.5015E+03\\
50&7.5948E+04&2.1307E+04&1.0654E+04\\
70&1.6112E+05&4.4494E+04&2.1921E+04\\
90&2.8034E+05&7.6647E+04&3.7394E+04\\
110&4.3446E+05&1.1795E+05&5.7137E+04\\
130&6.2409E+05&1.6853E+05&8.1197E+04\\
150&8.4971E+05&2.2850E+05&1.0961E+05\\
\hline
\end{tabular}
\caption{Condition numbers of the classic Laguerre spectral method
for \eqref{4.1s} with $\gamma=1$.}\label{Tab3.1}\end{minipage}
\end{table}
\end{center}

\begin{center}
\begin{table}[h]\begin{minipage}{0.5\linewidth}
\begin{tabular}{|c|c|c|c|}
\hline
N&$\beta=1$&$\beta=2$&$\beta=3$\\
\hline
10&5.8863E+01&2.1075E+02&4.4972E+02\\
30&4.1557E+02&1.6049E+03&3.5610E+03\\
50&1.0965E+03&4.2960E+03&9.5913E+03\\
70&2.1016E+03&8.2840E+03&1.8540E+04\\
90&3.4309E+03&1.3569E+04&3.0407E+04\\
110&5.0845E+03&2.0151E+04&4.5191E+04\\
130&7.0623E+03&2.8029E+04&6.2894E+04\\
150&9.3643E+03&3.7205E+04&8.3515E+04\\
\hline
\end{tabular}
\caption{Condition numbers of the classic Laguerre spectral method
for \eqref{5.1} with $\gamma=1$ and $\mu=1$.}\label{Tab3.2}\end{minipage}
\end{table}
\end{center}

\end{document}